\documentclass[11pt]{article}
\setlength{\skip\footins}{0.6cm}

\usepackage[utf8]{inputenc}
\usepackage{lmodern}
\usepackage{subfiles}
\usepackage{enumitem}
\setenumerate{topsep=0pt,ref={\normalfont(\roman*)},label={\normalfont(\roman*)}, itemsep=0pt} 
\usepackage[indent=15pt,skip=5pt]{parskip}

\usepackage{amsfonts}
\usepackage{amsthm}
\usepackage{amsmath}
\usepackage{amssymb}
\usepackage{amscd}
\usepackage{mathrsfs}
\usepackage{mathtools}
\usepackage{bbm}
\usepackage{esint}

\usepackage{comment}
\usepackage[margin=2.7cm]{geometry}
\usepackage{setspace}
\usepackage{indentfirst}
\usepackage{graphicx}
\usepackage{graphics}
\usepackage{lscape}
\usepackage{pgf,tikz}
\usepackage{tikz-cd}
\usepackage{color}
\usepackage{pict2e}
\usepackage{epic}
\usepackage{epstopdf}
\usepackage{titlesec, titlefoot}
\titleformat{\section}[block]{\Large\bfseries\filcenter}{\thesection}{1em}{}
\usepackage{commath}
\usepackage{float}
\usepackage{caption}
\usepackage{etoolbox}
\usepackage[affil-it]{authblk}
\usepackage{combelow}

\usepackage[hidelinks, bookmarksdepth=3,citecolor=blue,colorlinks]{hyperref}
\hypersetup{bookmarksopen=true} 
\usepackage{hypcap}

\graphicspath{{./Pictures/}}
\allowdisplaybreaks

\expandafter\def\expandafter\normalsize\expandafter{%
\normalsize
\setlength\abovedisplayskip{6pt}
\setlength\belowdisplayskip{6pt}
\setlength\abovedisplayshortskip{6pt}
\setlength\belowdisplayshortskip{6pt}
}



\theoremstyle{plain}

\renewcommand*\thesection{\arabic{section}}
\numberwithin{equation}{section} 

\newtheorem{theorem}{Theorem}[section]
\newtheorem{lemma}[theorem]{Lemma}
\newtheorem*{lemma*}{Lemma}

\newtheorem{corollary}[theorem]{Corollary}

\theoremstyle{definition}
\newtheorem{definition}[theorem]{Definition}

\expandafter\let\expandafter\oldproof\csname\string\proof\endcsname
\let\oldendproof\endproof
\renewenvironment{proof}[1][\proofname]{%
\oldproof[\upshape \bfseries #1]%
}{\oldendproof}

\makeatletter
\def\@makechapterhead#1{%
\vspace*{50\p@}%
{\parindent \z@ \raggedright \normalfont
\interlinepenalty\@M
\Huge\bfseries  \thechapter.\quad #1\par\nobreak
\vskip 40\p@
}}
\makeatother

%


\newcommand{\eps}{\varepsilon}

\DeclareMathOperator{\ddiv}{div}
\DeclareMathOperator{\supp}{supp}

\DeclareMathOperator \dist{dist}

\DeclareMathOperator{\cof}{cof}

\DeclareMathOperator{\sign}{sign}

\def \R {\mathbb{R}}

\def \N{\mathbb{N}}
\def \D{\textup{D}}

\def \e{\varepsilon}
\def \d{\,\textup{d}}

\def \p{\partial}
\def \mc{\mathcal}
\def \mb{\mathbb}

\def \wstar{\overset{\ast}{\rightharpoonup}}

\def \tp{\textup}
\def \Id{\textup{Id}}
\def \id{\textup{id}}

\begin{document}

	\title{\textbf{Regularity and compactness for critical points\\ of degenerate polyconvex energies}}
	
	\author[1]{{\Large Andr\'e Guerra}}
	\author[2]{{\Large Riccardo Tione}}
		
	\affil[1]{\small Institute for Theoretical Studies,  ETH Zürich,  Clausiusstrasse 47, 8006 Zürich,  Switzerland
	\protect\\
	{\tt{andre.guerra@eth-its.ethz.ch}} \vspace{1em} \ }
	
	\affil[2]{\small Max Planck Institute for Mathematics in the Sciences, Inselstrasse 22, 04103 Leipzig, Germany
	\protect\\
	{\tt{riccardo.tione@mis.mpg.de}}  }
	
	\date{}
	
	\maketitle

	\begin{abstract}
	We study Lipschitz critical points of the energy $\int_\Omega g(\det \D u) \d x$ in two dimensions, where $g$ is a strictly convex function.  We prove that the Jacobian of any Lipschitz critical point is constant, and that the Jacobians of sequences of approximately critical points converge strongly.  The latter result answers in particular an open problem posed by Kirchheim, M\"uller and \v Sver\'ak in 2003. 
	\end{abstract}
	
\unmarkedfntext{
\hspace{-0.75cm}
\emph{Acknowledgments.} AG acknowledges the support of Dr.\ Max R\"ossler, the Walter Haefner Foundation and the ETH Z\"urich Foundation.  We thank Paolo Bonicatto for helpful discussions concerning \cite{Bianchini2016a}.
}
	

\section{Introduction}

Let $\Omega\subset \R^2$ be a bounded domain with $|\partial \Omega|=0$ and consider the energy	
\begin{equation}
\label{eq:energy}
\mb E[u]\equiv \int_\Omega g(\det \D u) \d x, \qquad u\colon \Omega \to \R^2,
\end{equation}
where $g\in C^1(\R)$ is strictly convex.   Energies of this type were studied extensively in the literature,  for instance in connection with elastic fluids or with the  \textit{prescribed Jacobian equation}, see e.g.\ \cite{Carrillo2010,Cellina1995,Dacorogna1981,Dacorogna1995,Evans2006,Kirchheim2003,Mascolo1983,Taheri2002}, as well as the recent works \cite{Tione2022,Yan2023}.   In fact, if $\Omega$ is smooth and $u_0$ is diffeomorphism on $\partial \Omega$,  then one can construct smooth minimizers for \eqref{eq:energy} with boundary data $u_0$ by applying the results of  \cite{Dacorogna1990} to solve the problem
$$
\begin{cases}
\det \D u = \fint_\Omega \det \D u_0(y) \d y,\\
u=u_0 \tp{ on } \partial \Omega.
\end{cases}
$$
For non-$C^1$ data, however,  issues arise with the applicability of this method \cite{GuerraKochLindberg2020c,GuerraKochLindberg2020}, and even if solutions exist they will not be $C^1$ in general.


In this paper we investigate Lipschitz \textit{critical points} of $\mb E$, that is, solutions of
\begin{equation}
\label{eq:EL}
\ddiv(g'(\det \D u) \cof(\D u)^T)=0.
\end{equation}	
The energy $\mb E$ is \textit{polyconvex} \cite{Ball1977} and so in particular it is quasiconvex.  Minimizers of \textit{strongly} quasiconvex energies are smooth almost everywhere \cite{Evans1986}, but for Lipschitz critical points neither quasiconvexity  \cite{Muller2003} nor polyconvexity \cite{Szekelyhidi2004} are enough to imply further regularity.   A challenging open problem is to understand whether further conditions on the solutions (such as stationarity) or further structural assumptions on the energy (such as the ones in this paper) imply regularity. We refer the reader to  \cite{DeLellis2019,Hirsch2023,Hirsch2021a,Kristensen2007,Tione2021} for results in this direction.

Note that $\mb E$ is a \textit{degenerate} polyconvex energy, in the sense that it only controls the Jacobian: in particular,  a Lipschitz critical point $u$ is not $C^1$ in general. However, \textit{if} $u$ is $C^1$, then
$$\det \D u \tp{ is constant in }  \Omega,$$
which is the best type of regularity one can hope for in this problem.  The first question we address is whether this constancy property continues to hold for critical points which are just Lipschitz.  The answer is affirmative:
	
\begin{theorem}[Regularity of exact solutions]\label{thm:reg}
Suppose that $g\in C^1(\R)$ is strictly convex and let $u\in W^{1,\infty}(\Omega,\R^2)$ be a solution of \eqref{eq:EL}. Then $\det \D u$ is constant a.e.\ in $\Omega.$
\end{theorem}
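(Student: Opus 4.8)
\emph{Strategy.} We may argue locally, so fix a ball $B\Subset\Omega$. The idea is to pass from the divergence-structure of \eqref{eq:EL} to an explicit algebraic relation between $\D u$ and an auxiliary map, and then to extract rigidity from it. Concretely, the Piola identity gives $\ddiv \cof(\D u)^T=0$, while \eqref{eq:EL} gives $\ddiv\big(g'(\det\D u)\cof(\D u)^T\big)=0$; both are $L^\infty$ divergence-free matrix fields on the simply connected set $B$, so the $2$D Poincar\'e lemma yields $w=(\chi_1,\chi_2)\in W^{1,\infty}(B,\R^2)$ with
\[
\D w = g'(\det\D u)\,\D u, \qquad\text{equivalently}\qquad \nabla\chi_i = g'(\det\D u)\,\nabla u^i\quad (i=1,2).
\]
Set $f:=g'(\det\D u)\in L^\infty(B)$. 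Since $g$ is strictly convex, $g'$ is a strictly increasing homeomorphism onto its range, so $f$ and $\det\D u$ determine each other measurably and it suffices to show that $f$ is a.e.\ constant. Along the way one records the algebraic identities $\det\D w = f^2\det\D u$, $\det\D(u^i,\chi_i)=0$, and $\det\D(u^1,\chi_2)=-\det\D(u^2,\chi_1)=f\det\D u$; equivalently, $(\D u,\D w)$ takes values a.e.\ in the set $K:=\{(A,g'(\det A)A):A\in\R^{2\times2}\}$.

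\emph{Rigidity on $\{\det\D u\neq0\}$.} On this set one has $\D w\,(\D u)^{-1}=f\,\Id$, i.e.\ $w$ is ``infinitesimally conformal with respect to $u$''. Heuristically, wherever $u$ is a local homeomorphism this says that $v:=w\circ u^{-1}$ satisfies $\D v=(f\circ u^{-1})\,\Id$; but a $W^{1,\infty}$ map whose gradient is a scalar multiple of the identity has first component depending only on $x_1$ and second component only on $x_2$, which forces that scalar to be constant. So $f$, and hence $\det\D u$, should be a.e.\ constant on each connected component of $\{\det\D u\neq0\}$. The content here is that $\det\D u$ is a priori only bounded, so one cannot localise naively to $\{\pm\det\D u>\delta\}$; the remedy is to exploit that the stream functions $\chi_i$ are genuinely Lipschitz — one derivative better than a generic $L^\infty$ advected quantity — together with the theory of mappings of finite distortion (on $\{\pm\det\D u>\delta\}$ the map $u$ is quasiregular, hence open, discrete, and a local homeomorphism off an isolated branch set), or, equivalently, to read the conclusion off as a rigidity statement for the differential inclusion $(\D u,\D w)\in K$: a direct $2\times2$ cofactor computation shows $K$ admits rank-one connections only within its fixed-determinant slices, and one must additionally exclude $T_N$-configurations.

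\emph{Matching the constants.} It remains to see that the local constant values agree and that nothing bad happens on $\{\det\D u=0\}$. Because \eqref{eq:EL} holds in $\mathcal D'(\Omega)$, the normal trace of $g'(\det\D u)\cof(\D u)^T$ has no jump across the reduced boundary separating two level sets of $\det\D u$; a Rankine--Hugoniot-type analysis of this interface condition — whose simplest instance, a flat interface between an affine phase $\{\det\D u=c\}$ and a rank-one phase $\{\det\D u=0\}$, already forces $c=0$ — shows that all the constant values (including the value on $\{\det\D u=0\}$, should that set carry positive measure) coincide. Combined with the previous step and the connectedness of $B$, this gives $\det\D u\equiv c$ a.e.\ on $B$, hence on $\Omega$.

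\emph{Main obstacle.} I expect the last two steps to be the crux: turning the purely $L^\infty$ relation $\D w=g'(\det\D u)\,\D u$ into genuine rigidity \emph{without} an a priori modulus of continuity for $\det\D u$ — that is, simultaneously (a) extracting enough structure of the nondegenerate phase to run the conformal change of variables (or the differential-inclusion rigidity), and (b) controlling the degenerate phase and the matching of constant values across phase boundaries.
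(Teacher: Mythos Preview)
Your strategy correctly sets up the potential $w$ with $\D w = g'(\det\D u)\,\D u$, but the two steps you yourself flag as the crux are genuine gaps, and the paper's proof bypasses them with an ingredient you are missing: a \emph{renormalization} property for the continuity equation in two dimensions (Bianchini--Gusev, building on Alberti--Bianchini--Crippa). Concretely, \eqref{eq:EL} says that $\rho:=g'(\det\D u)$ solves $\ddiv(\rho\,b_i)=0$ for the divergence-free rows $b_i$ of $\cof(\D u)^T$, and the renormalization result gives $\ddiv\big(\beta(\rho)\cof(\D u)^T\big)=0$ for \emph{every} smooth $\beta$. Choosing $\beta$ to be a cutoff that vanishes for $t\le g'(\e/2)$ and equals $1$ for $t\ge g'(\e)$ produces a Lipschitz potential $v$ on the whole ball with $\D v=\beta(\rho)\,\D u$. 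Now $v$ is quasiregular on the \emph{entire} simply connected domain (since at points where $\det\D v=0$ one has $\D v=0$, and elsewhere the distortion is bounded by that of $u$ on $\{\det\D u\ge\e/2\}$), so the quasiregular dichotomy --- $v$ constant or $\det\D v>0$ a.e.\ --- applies globally and immediately forces $\det\D u\ge\e/2$ a.e. One then concludes via the earlier result of Tione for the nondegenerate case $|\det\D u|\ge\e/2>0$.

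By contrast, your route tries to localise to the measurable set $\{\pm\det\D u>\delta\}$, which is not known to be open, so neither the change of variables $v=w\circ u^{-1}$ nor the quasiregular/finite-distortion machinery is available there; and the Rankine--Hugoniot matching across ``phase boundaries'' presupposes a rectifiable interface structure that an $L^\infty$ Jacobian simply does not have a priori. Your alternative via the differential inclusion $K_g$ runs into the same wall: rank-one convexity of $K_g$ is known (Kirchheim--M\"uller--\v Sver\'ak), but ruling out $T_N$-configurations is precisely the hard part --- indeed the paper obtains the quasiconvexity of $K_g$ only as a \emph{corollary} of the compactness theorem, not as an input to regularity. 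The renormalization step is exactly what dissolves both your obstacles (a) and (b) at once: it globalises the quasiregular argument and eliminates the need for any interface analysis.
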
	

Previously, in \cite{Tione2022} the second author proved Theorem \ref{thm:reg} under the additional assumption that $\lvert \det \D u\rvert\geq \e>0$ a.e.\ in $\Omega$.  In fact, our proof of Theorem \ref{thm:reg} relies on this result.

As a consequence of Theorem \ref{thm:reg} we find the following perhaps surprising fact:

\begin{corollary}[Critical points are minima]\label{cor:minima}
If $u\in W^{1,\infty}(\Omega,\R^2)$ solves \eqref{eq:EL} then $u$ is a minimizer of $\mb E$, in the sense that
$\mb E[u]\leq \mb E[v]$ for all $v\in u+W^{1,\infty}_0(\Omega,\R^2)$.
\end{corollary}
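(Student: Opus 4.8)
The plan is to deduce Corollary \ref{cor:minima} directly from Theorem \ref{thm:reg} together with the polyconvexity of $\mb E$. First I would invoke Theorem \ref{thm:reg}: since $u\in W^{1,\infty}(\Omega,\R^2)$ solves \eqref{eq:EL}, we know $\det \D u \equiv c$ a.e.\ in $\Omega$ for some constant $c\in\R$. Hence $\mb E[u]=\int_\Omega g(c)\d x = g(c)\,|\Omega|$.

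The key point is that for any competitor $v\in u+W^{1,\infty}_0(\Omega,\R^2)$, the average of the Jacobian is a null-Lagrangian: because $v-u\in W^{1,\infty}_0(\Omega,\R^2)$ and $|\partial\Omega|=0$, one has $\int_\Omega \det \D v \d x = \int_\Omega \det \D u \d x = c\,|\Omega|$. This is the standard weak continuity/integration-by-parts identity for the Jacobian (the determinant is a null Lagrangian), valid here since both $u$ and $v$ are Lipschitz and agree on the boundary. Combining this with Jensen's inequality for the convex function $g$ applied to the probability measure $\d x/|\Omega|$ on $\Omega$ gives
\begin{equation*}
\mb E[v]=\int_\Omega g(\det \D v)\d x \geq |\Omega|\, g\!\paren{\fint_\Omega \det \D v\d x} = |\Omega|\,g(c) = \mb E[u].
\end{equation*}
This proves $\mb E[u]\leq \mb E[v]$ for all such $v$, which is exactly the claimed minimality.

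There is essentially no obstacle here: the only two ingredients are Theorem \ref{thm:reg} (already established) and two classical facts — that the Jacobian is a null Lagrangian on $u+W^{1,\infty}_0$, and Jensen's inequality for the convex $g$. The mild care needed is in justifying the null-Lagrangian identity at the Lipschitz regularity level, but this is routine: one approximates or uses the divergence-structure of $\det \D v = \ddiv\!\paren{\tfrac12(\cof \D v)^T v}$ (in two dimensions) and the fact that $v=u$ on $\partial\Omega$ with $|\partial\Omega|=0$. I would also remark that strict convexity of $g$ combined with equality in Jensen forces $\det \D v$ to be constant as well whenever $v$ is also a minimizer, giving a uniqueness-type statement for the Jacobian among minimizers, though this is not needed for the corollary itself.
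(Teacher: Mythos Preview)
Your proof is correct and follows essentially the same approach as the paper: invoke Theorem \ref{thm:reg} to get $\det \D u\equiv c$, use that the Jacobian is a null Lagrangian so $\int_\Omega \det \D v=\int_\Omega \det \D u$, and then exploit the convexity of $g$. The only cosmetic difference is that the paper uses the pointwise supporting-line inequality $g(\det \D v)\ge g(c)+g'(c)(\det \D v-c)$ and integrates, whereas you apply Jensen's inequality directly; these are equivalent here.
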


The second question we want to address concerns Lipschitz maps which solve \eqref{eq:EL} only approximately: is it the case that such maps are close to an actual solution of \eqref{eq:EL}, and if so in which sense? Here we prove the following:

\begin{theorem}[Compactness of approximate solutions]\label{thm:cpt}
Suppose that $g\in C^1(\R)$ is strictly convex.  Let $u_j\wstar u$ in $W^{1,\infty}(\Omega,\R^2)$ and suppose  that there is $(F_j)\subset L^\infty(\Omega,\R^{2\times 2})$ bounded and such that
\begin{equation}
\label{eq:approxEL}
\ddiv ( g'(\det \D u_j) \cof(\D u_j)^T ) = \ddiv( F_j), \qquad F_j \to 0 \text{ in } L^1(\Omega).
\end{equation}
Then  $u$ solves \eqref{eq:EL} and $\det \D u_j \to \det \D u$ in $L^p(\Omega)$ for all $p<\infty$. 
\end{theorem}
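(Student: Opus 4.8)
The plan is to combine the rigidity statement of Theorem~\ref{thm:reg} with a compensated-compactness argument. First I would reduce to the case where the limit Jacobian is understood: since $u_j\wstar u$ in $W^{1,\infty}$, by the weak continuity of minors (the div-curl structure of the Jacobian, cf.\ \cite{Ball1977}) we have $\det \D u_j \wstar \det \D u$ weak-$*$ in $L^\infty(\Omega)$, so the only issue is to upgrade this to strong $L^p$ convergence. Passing to the limit in \eqref{eq:approxEL} is delicate because $g'(\det\D u_j)$ need not converge: so I would introduce the Young measure $(\nu_x)_{x\in\Omega}$ generated by $\D u_j$ and observe that, because $\cof(\D u)^T\D u = (\det \D u)\Id$, one has the algebraic identity $g'(\det \D u_j)\cof(\D u_j)^T = $ (something expressible through the minors of $\D u_j$ plus lower-order terms that vanish in the limit). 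The key point is to pass to the limit in the \emph{scalar} product $\langle g'(\det \D u_j)\cof(\D u_j)^T, \D u_j\rangle = 2 g'(\det \D u_j)\det \D u_j$ using the div-curl lemma: the left factor is a distributionally-controlled divergence by \eqref{eq:approxEL} (modulo $F_j\to 0$ in $L^1$, which I must handle by a truncation/equi-integrability argument since $F_j$ is only $L^1$-small, not $L^\infty$-small), and $\D u_j = \D u_j$ is curl-free. This yields
\begin{equation}
\label{eq:divcurl-limit}
g'(\det \D u_j)\,\det \D u_j \;\longrightarrow\; \overline{g'(\det \D u)}\cdot \det \D u \quad\text{in }\mc D'(\Omega),
\end{equation}
where $\overline{g'(t)} = \int g'(\lambda)\,\D\pi_x(\lambda)$ and $\pi_x$ is the pushforward of $\nu_x$ under $\D u\mapsto \det\D u$.

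Next, I would identify the limiting div-curl relation with an Euler--Lagrange-type equation for a limit object. The div-curl conclusion \eqref{eq:divcurl-limit}, together with the weak convergence $\det\D u_j\wstar \det\D u$ and the lower-semicontinuity/convexity of $g$, forces the measure $\pi_x$ to satisfy a rigidity constraint: by strict convexity of $g$, the function $t\mapsto g'(t)t$ is such that $\overline{g'(t)}\cdot\bar t \ge \overline{g'(t)\,t}$ only with equality iff $\pi_x$ is a Dirac mass. Here I would need to also pass to the limit in $g'(\det\D u_j)\cof(\D u_j)^T$ tested against $\D w$ for arbitrary $w\in W^{1,\infty}_0$, obtaining that the weak limit $G := \text{w-}\lim g'(\det\D u_j)\cof(\D u_j)^T$ is divergence-free, and then showing $G = g'(\det \D u)\cof(\D u)^T$ — which is exactly where I invoke that the Young measure is a Dirac mass in the Jacobian variable. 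Combined, these give that $u$ solves \eqref{eq:EL}, hence by Theorem~\ref{thm:reg} $\det\D u\equiv c$ is constant.

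Finally, with $\det \D u \equiv c$ constant and $\pi_x = \delta_c$ for a.e.\ $x$, strong convergence follows: $\det\D u_j \wstar c$ and the Young measure in the Jacobian variable is $\delta_c$, which by the standard Young-measure characterization of strong convergence gives $\det\D u_j \to c$ in $L^p_{\loc}$, and then in $L^p(\Omega)$ for all $p<\infty$ by the uniform $L^\infty$ bound and dominated convergence. I expect the main obstacle to be the first step: making the div-curl argument work rigorously when the right-hand side defect $F_j$ is merely small in $L^1$ (not in any reflexive space). The remedy I have in mind is a Lipschitz truncation of $u_j$ — replace $u_j$ by $u_j^\lambda$ agreeing with $u_j$ outside a set of small measure and with $\|\D u_j^\lambda\|_\infty \lesssim \lambda$ — so that on the good set the equation holds up to a genuinely small error, and then let $\lambda\to\infty$ after $j\to\infty$; controlling the interaction of this truncation with the nonlinearity $g'$ and with the Young measure generated by $\D u_j$ is the technical heart of the proof.
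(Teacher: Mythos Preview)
There is a genuine gap in your compensated-compactness step. The div-curl lemma applied to the pair $\big(g'(\det\D u_j)\cof(\D u_j)^T,\,\D u_j\big)$ gives
\[
2\,g'(\det\D u_j)\det\D u_j \;\rightharpoonup\; \big\langle G(x),\,\D u(x)\big\rangle,
\qquad
G(x)=\int_{\R^{2\times 2}} g'(\det M)\,\cof(M)^T\,d\nu_x(M),
\]
not the expression $\overline{g'}\cdot\det\D u$ in your \eqref{eq:divcurl-limit}. The weak limit $G$ depends on the full matrix $M$, not only on $\det M$, so the scalar monotonicity inequality $\overline{g'(t)t}\geq\overline{g'(t)}\cdot\overline t$ (which, incidentally, goes the opposite way from what you wrote) is not applicable: the identity you actually obtain,
\[
\int g'(\det M)\det M\,d\nu_x
=\tfrac12\Big\langle\!\int g'(\det M)\cof(M)^T\,d\nu_x,\;\int N\,d\nu_x\Big\rangle,
\]
does \emph{not} force $\pi_x=(\det)_\#\nu_x$ to be a Dirac mass. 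In the paper this identity is essentially Step~1 of Theorem~\ref{thm:mainapprox} (the matrix version \eqref{eq:Bj}), and it only settles the degenerate case $\det A=0$; for $\det A\neq0$ much more is required.

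What is missing are two further ingredients that the paper uses and that your sketch has no analogue for: Allard's strong constancy lemma (applied to the image measures $S_{u_j}$ obtained via the area formula) to get an $L^1$ upper bound $|g'(\det\D u_j)|\leq c$ on the support of $\nu_x$, and then the $0$--$1$ law for quasiregular gradient Young measures \cite{Astala2002} to exclude the remaining possibility that $\nu_x$ charges both $\{\det=0\}$ and $\{\det=\tilde c\}$. Without these, a homogeneous gradient Young measure supported in $K_g$ and satisfying the div-curl constraint can still split mass between two determinant levels, so your monotonicity argument cannot close. Note also that your appeal to Theorem~\ref{thm:reg} is redundant: if you had already shown $\pi_x=\delta_{c(x)}$ for a.e.\ $x$, strong convergence of $\det\D u_j$ follows immediately from Theorem~\ref{thm:props}\ref{it:strong}, and the paper's proof of Theorem~\ref{thm:cpt} indeed makes no use of Theorem~\ref{thm:reg}.
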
	

This result is optimal, as in general we do not have $\D u_j\to \D u$ in $L^p(\Omega)$ \cite{Tione2022}, since $\mb E$ is a degenerate energy. 
By rewriting \eqref{eq:EL} as a differential inclusion (cf.\ Section \ref{sec:approx}),  we obtain:
	
\begin{corollary}[Quasiconvexity of the differential inclusion]\label{cor:qc}
The set $K\subset \R^{4\times 2}$, defined by
\begin{equation}
\label{eq:defK}
K_g \equiv \bigg\{\left(\begin{matrix}
A \\ g'(\det A) A
\end{matrix}\right): A \in \R^{2\times 2}\bigg\},
\end{equation}
is quasiconvex. 
\end{corollary}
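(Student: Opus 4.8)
\emph{Proof strategy.} The plan is to obtain Corollary \ref{cor:qc} as a soft consequence of Theorems \ref{thm:reg} and \ref{thm:cpt} via the differential-inclusion reformulation of \eqref{eq:EL} from Section \ref{sec:approx}. Recall the shape of that reformulation: in two dimensions a divergence-free matrix field is, up to a fixed rotation $J$ (with $J^{2}=-\Id$), a gradient, and using this together with the identity $\cof A=JAJ^{T}$ one checks that $u\in W^{1,\infty}(\Omega;\R^{2})$ solves \eqref{eq:EL} if and only if there is $v\in W^{1,\infty}(\Omega;\R^{2})$ with $\D v=g'(\det\D u)\,\D u$ a.e., i.e.\ $\D w\in K_{g}$ a.e.\ for $w:=(u,v)$. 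The same computation shows that $u$ solves the approximate equation \eqref{eq:approxEL} with $(F_{j})$ bounded in $L^{\infty}$ and $F_{j}\to0$ in $L^{1}$ precisely when the corresponding maps $w_{j}:=(u_{j},v_{j})$ satisfy: $\D w_{j}$ bounded in $L^{\infty}$, and $\dist(\D w_{j},K_{g})\to0$ in $L^{1}(\Omega)$; the only analytic input is the continuity --- uniform on bounded sets --- of $g'$.

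With this dictionary I would invoke the Young-measure description of quasiconvexity: $K_{g}$ is, by definition, quasiconvex iff $K_{g}=K_{g}^{qc}$, and (by the theory of gradient Young measures) $K_{g}^{qc}$ is the set of barycenters of homogeneous $W^{1,\infty}$-gradient Young measures supported in $K_{g}$. Since $K_{g}\subseteq K_{g}^{qc}$ is trivial, fix a homogeneous gradient Young measure $\nu$ with $\supp\nu\subseteq K_{g}$ and pick a generating sequence $w_{j}=(u_{j},v_{j})$, bounded in $W^{1,\infty}(\Omega;\R^{4})$, with $w_{j}\wstar w$, $\D w\equiv\bar\nu$ (so $w$ is affine and $\D u\equiv\bar A$ for some constant $\bar A\in\R^{2\times2}$), and $\dist(\D w_{j},K_{g})\to0$ in $L^{1}$ --- the last property being available because $\D w_{j}$ is bounded and $\dist(\D w_{j},K_{g})\to0$ in measure. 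By the reformulation, $(u_{j})$ is a sequence of approximate solutions of \eqref{eq:EL} in the sense of \eqref{eq:approxEL}, so Theorem \ref{thm:cpt} applies: $u$ solves \eqref{eq:EL} and, crucially, $\det\D u_{j}\to\det\D u=\det\bar A$ in $L^{p}(\Omega)$ for every $p<\infty$. It is here that the \emph{approximate} statement is indispensable and Theorem \ref{thm:reg} alone would not do: one cannot project the generating sequence onto $K_{g}$, since correcting the second block to land on $K_{g}$ would force the first block to be an exact solution of \eqref{eq:EL}.

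It remains to pass to the limit in the inclusion. Write $\D v_{j}=g'(\det\D u_{j})\,\D u_{j}+E_{j}$; since the map $A\mapsto g'(\det A)A$ is uniformly continuous on the bounded set containing the values of $\D u_{j}$, one has $\lvert E_{j}\rvert\lesssim \dist(\D w_{j},K_{g})+\omega\bigl(\dist(\D w_{j},K_{g})\bigr)$ with $\omega$ its modulus of continuity, so $E_{j}$ is bounded in $L^{\infty}$ and $E_{j}\to0$ in $L^{1}$. Since $\det\D u_{j}\to\det\bar A$ strongly, $g'$ is continuous and $(\det\D u_{j})$ is bounded, dominated convergence gives $g'(\det\D u_{j})\to g'(\det\bar A)$ in every $L^{p}$, $p<\infty$; together with $\D u_{j}\wstar\bar A$ in $L^{\infty}$ this yields, as a product of a strongly and a weakly-$\ast$ convergent sequence, $g'(\det\D u_{j})\,\D u_{j}\wstar g'(\det\bar A)\,\bar A$. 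Hence $\D v_{j}\wstar g'(\det\bar A)\,\bar A$, so $\D v=g'(\det\bar A)\,\bar A$ and $\bar\nu=\D w=\bigl(\bar A,\,g'(\det\bar A)\,\bar A\bigr)\in K_{g}$, as desired.

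In short, once Theorems \ref{thm:reg} and \ref{thm:cpt} are in hand the corollary is essentially bookkeeping; the one genuinely delicate point is the passage to the limit in the \emph{nonlinear} quantity $g'(\det\D u_{j})$. A priori $\det\D u_{j}$ converges only weakly-$\ast$ --- it is a null Lagrangian --- which would be useless, and it is exactly the strong convergence of Jacobians supplied by Theorem \ref{thm:cpt} that closes the argument, with Theorem \ref{thm:reg} guaranteeing that the limiting Jacobian is a true constant, consistently with $w$ being affine.
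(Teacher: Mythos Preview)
Your argument is correct and follows essentially the same route as the paper: characterize $K_g^{\mathrm{qc}}$ via \eqref{eq:Kqc} as the set of barycenters of homogeneous gradient Young measures supported in $K_g$, produce a generating sequence with $\dist(\D w_j,K_g)\to 0$ in $L^1$ (Theorem~\ref{thm:props}\ref{it:support},\ref{it:bc}), translate via Lemma~\ref{lem:dist}, and invoke the compactness result. The only difference is packaging: the paper cites Theorem~\ref{thm:mainapprox} directly, whose final clause already states $(A,B)=\bar\nu\in K_g$, so your separate limit-passage in Step~5 is reproving Step~10 of that theorem; likewise the appeal to Theorem~\ref{thm:reg} is superfluous, since $w$ is affine by construction and hence $\det\D u$ is automatically constant.
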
	

In the particular case $g(t)=t^2$, Corollary \ref{cor:qc} answers \cite[Question 10]{Kirchheim2003}.  In that work it was shown that $K_g$ is \textit{rank-one convex}; for subsets of $\R^{4\times 2}$, this is a condition which is strictly weaker than quasiconvexity \cite{Sverak1992a} but which is easier to verify. We refer the reader to \cite{Muller1999a,Rindler2018} for further details on quasiconvexity and rank-one convexity.

\bigskip
The proofs of our main results use various ingredients. First, to show Theorem \ref{thm:reg}, we employ ideas coming from the theory of transport equations, in particular the renormalization properties in the sense of DiPerna--Lions \cite{DiPerna1989} of two-dimensional solutions, which were shown in \cite{Alberti2014,Bianchini2016a}. Next we employ some deep but rather classical results coming from the theory of quasiregular mappings, and we refer the interested reader to \cite{DePhilippis2023,Faraco2008,Kirchheim2008,Sverak1993} for further applications of quasiregular maps in the theory of differential inclusions. To show Theorem \ref{thm:cpt}, the main ingredients are Allard's strong constancy lemma \cite{Allard1986} and the 0-1 law for quasiregular gradient Young measures \cite{Astala2002}. We note that the ideas behind Allard's lemma have recently been applied in \cite{DePhilippis2016} in a rather different problem.





\section{Notation and setup}

We will say that $\Omega\subset\R^m$ is a domain if it is an open, connected bounded set with $|\partial \Omega| = 0$.  Throughout this paper we write $A,B$ for matrices in $\R^{n\times n}$ and we denote by $\langle A,B\rangle$ their Hilbert--Schmidt inner product.  We write $L_A\colon \R^n\to \R^n$ for the linear map induced by $A$. We also write $\cof(A)$ for the cofactor matrix of $A$,  which satisfies
\begin{equation}
\label{eq:cof}
\cof(A) A = A \cof(A) = (\det A) \Id.
\end{equation}
An important property of the cofactor is the so-called Piola identity
\begin{equation}
\label{eq:divcof}
\ddiv(\cof(\D u)^T)=0,
\end{equation}
valid for Lipschitz maps $u\colon \R^n\to \R^n$.  Note that, when $n=2$,  $\cof\colon \R^{2\times 2}\to \R^{2\times 2}$ is \textit{linear}, since $\cof(A) = JA^T J^T$, where $J$ is the matrix corresponding to a rotation by $\pi/2$. 

We say that a function $g\colon \R\to \R$ is \textit{strictly convex} if
$$g(\lambda x + (1-\lambda)y)<\lambda g(x)+(1-\lambda)g(y)$$
whenever $x\neq y$ and $\lambda \in (0,1)$.  If $g \in C^1(\R)$, then it is elementary to see that $g$ is a strictly convex function if and only if $g'$ is strictly increasing.  By \eqref{eq:divcof},  for the results of this paper we can and will always assume that
\begin{equation}\label{eq:g'0}
g'(0)=0.
\end{equation}
When $g$ is strictly convex, i.e.\ when $g'$ is strictly increasing, \eqref{eq:g'0} implies:
\begin{equation}
\label{eq:g'(t)t}
g'(t)t=|g'(t)| |t|\geq 0.
\end{equation}

\section{Exact solutions}\label{sec:exact}


The purpose of this section is to prove Theorem \ref{thm:reg}.  We begin by stating the following deep result, which is essentially due to Bianchini--Gusev, but see also the earlier work \cite{Alberti2014}.

\begin{theorem}[Renormalized solutions]\label{thm:renorm}
Let $u\in W^{1,\infty}$ solve \eqref{eq:EL} and $\beta\in C^\infty_c(\R)$. Then
$$\ddiv(\beta(g'(\det \D u)) \cof(\D u)^T) = 0 .$$
\end{theorem}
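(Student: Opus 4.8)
The plan is to deduce this statement, which concerns a $2\times 2$ system, from the renormalization property of \emph{scalar} two-dimensional divergence-free vector fields, applied to \eqref{eq:EL} one row at a time. Write $u=(u^1,u^2)$ and set $w\equiv g'(\det\D u)$; since $u\in W^{1,\infty}(\Omega,\R^2)$ we have $\det\D u\in L^\infty(\Omega)$ and hence $w\in L^\infty(\Omega)$, so that every distributional expression below (also with $w$ replaced by $\beta(w)$) is well defined. In two dimensions the two rows of $\cof(\D u)^T$ are, up to sign and relabelling, the rotated gradients $\n^\perp u^1$ and $\n^\perp u^2$, where $\n^\perp\phi\equiv J\n\phi$; in particular each row is a \emph{bounded} vector field on $\Omega$ which is \emph{divergence-free}, as recorded by the Piola identity \eqref{eq:divcof}. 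Reading \eqref{eq:EL} row by row then says precisely that
\[
\ddiv\bigl(w\,\n^\perp u^1\bigr)=0 \qquad\text{and}\qquad \ddiv\bigl(w\,\n^\perp u^2\bigr)=0 \qquad\text{in }\mc D'(\Omega),
\]
i.e.\ $w$ is a bounded distributional solution of the stationary transport equation associated with each of the two bounded divergence-free fields $\n^\perp u^1$ and $\n^\perp u^2$.

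I would then invoke the renormalization property of bounded divergence-free vector fields in the plane. Each $\n^\perp u^i$ is bounded and divergence-free on $\Omega$, and admits the globally defined Lipschitz Hamiltonian $u^i$, so that no difficulty with multivalued potentials arises. By the results of Bianchini--Gusev \cite{Bianchini2016a} (see also Alberti--Bianchini--Crippa \cite{Alberti2014}), bounded distributional solutions of the associated stationary transport equation are renormalized: for every $\beta\in C^\infty_c(\R)$,
\[
\ddiv\bigl(\beta(w)\,\n^\perp u^i\bigr)=0 \qquad\text{in }\mc D'(\Omega), \quad i=1,2.
\]
As the chain-rule/renormalization statement proved there is local in character, it applies directly on $\Omega$ (or, if one prefers the setting of the whole plane, after the routine localization to balls $B\Subset\Omega$ on which the identity is tested). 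Reassembling these two identities, keeping track of the signs relating the rows of $\cof(\D u)^T$ to $\n^\perp u^2$ and $\n^\perp u^1$, gives $\ddiv\bigl(\beta(g'(\det\D u))\cof(\D u)^T\bigr)=0$, which is the claim.

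The only thing to check beyond this bookkeeping is that the hypotheses of the cited renormalization theorem are met — boundedness of $w$ and of $\n^\perp u^i$, both immediate from $u\in W^{1,\infty}$, and the fact that it is the \emph{stationary} (rather than the time-dependent) transport equation that is relevant here, so that no weak-Sard-type assumption on the Hamiltonian need be verified — together with matching our formulation on the domain $\Omega$ with the precise statement available in the references. I expect this matching to be the only real obstacle, and a minor one: the substance of the theorem is entirely contained in \cite{Alberti2014,Bianchini2016a}. Finally, I note that strict convexity of $g$ plays no role in this step: the argument uses only that $w=g'(\det\D u)$ is a bounded scalar function, whereas strict monotonicity of $g'$ will enter later in the proof of Theorem \ref{thm:reg}.
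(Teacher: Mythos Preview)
Your proposal is correct and follows essentially the same route as the paper: both decompose \eqref{eq:EL} row by row into two stationary continuity equations $\ddiv(\rho\,b_i)=0$ with bounded density $\rho=g'(\det\D u)$ and bounded divergence-free fields $b_i$ given by the rows of $\cof(\D u)^T$, and then invoke the renormalization theorem of Bianchini--Gusev \cite{Bianchini2016a} (see also \cite{Alberti2014}). Your write-up simply adds the explicit identification of the rows as rotated gradients $\nabla^\perp u^i$ and some commentary on hypothesis-checking, but the substance is identical.
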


\begin{proof}
The result follows directly from \cite[Theorem 6.7]{Bianchini2016a}. Indeed, \eqref{eq:EL} corresponds to a system of two (time-independent) continuity equations $\ddiv(\rho\, b_i)=0$, where the density is $\rho\equiv g'(\det \D u)$ and the bounded vector fields $b_i\colon \Omega \to \R^2$ are the rows of $\cof( \D u)^T$, for $i=1,2$, which are divergence-free by \eqref{eq:divcof}. 
\end{proof}

\begin{proof}[Proof of Theorem \ref{thm:reg}]
We can assume that it is not the case that $\det \D u=0$ a.e.\ in $\Omega$,  since otherwise there is nothing to show.  We may then also assume that $\det \D u>0$ a.e.\ on a set of positive measure, the case $\det \D u<0$ being totally analogous. In particular, there is $\e>0$ small enough so that 
\begin{equation}
\label{eq:measuredet}
\lvert\{x\in \Omega:\det \D u(x)\geq \e\}\rvert>0.
\end{equation}

Let us write $\tilde \e /2 \equiv g'(\e/2)> 0$. We now take $\beta \in C^\infty_c(\R,[0,1])$ such that $\beta(t)=0$ if $t\leq \tilde \e/2$ and $\beta(t)=1$ if $\tilde \e \le t \le 2\|g'(\det(\D u))\|_{L^\infty}$.  
By Theorem \ref{thm:renorm},  the matrix field $\beta(g'(\det \D u))\cof(\D u)^T$ is divergence-free.  Since the statement in Theorem \ref{thm:reg} is local, there is also no loss of generality in assuming that $\Omega$ is simply connected, and so there is $v\in W^{1,\infty}(\Omega,\R^2)$ such that 
\begin{equation}
\label{eq:potential}
\beta(g'(\det \D u)) \D u = \D v.
\end{equation} 
By taking determinants on both sides of this identity, we get
\begin{equation}
\label{eq:detuv}
\beta(g'(\det \D u))^2 \det \D u = \det \D v.
\end{equation} 
The above two identities show that, for a.e.\ $x \in \Omega$, if $\det \D v(x) = 0$, then $\D v(x) = 0$, due to \eqref{eq:g'0} and the definition of $\beta$. On the other hand,  by \eqref{eq:potential},  at a.e.\ $x$ with $\det \D v(x) \neq 0$ we have
\[
0 < \frac{|\D v|^2(x)}{\det \D v(x)} = \frac{|\D u|^2(x)}{\det \D u(x)} \le \frac{2\Lambda^2}{\e},
\]
where $\Lambda > 0$ is the Lipschitz constant of $u$. In other words, there exists $K > 0$ such that $|\D v|^2(x)\leq K \det \D v(x)$ for a.e.\ $x \in \Omega$, i.e.\ the map $v$ is quasiregular. It is well-known that a quasiregular map on a domain is either constant or else its Jacobian is positive a.e., see e.g.\ \cite[Theorem 16.10.1]{Iwaniec2001}. By \eqref{eq:measuredet} we cannot have $\D v =0$ a.e.\ and so $v$ is not constant, hence $\det \D v>0$ a.e.\ in $\Omega$; thus in fact from \eqref{eq:detuv} we deduce $g'(\det\D u) \ge \tilde \e/2$ a.e., which in turn implies $\det \D u \ge \e/2$ a.e.\ in $\Omega$. Finally we conclude by applying  \cite[Theorem 1]{Tione2022}. We note that, in \cite{Tione2022}, the stronger conditions $g\in C^2$ and $g''>0$ are assumed, but that they are not needed for the proof.
\end{proof}

\begin{proof}[Proof of Corollary \ref{cor:minima}]
By Theorem \ref{thm:reg}, there is $c\in \R$ such that $\det \D u = c$ a.e.\ in $\Omega$.  By convexity of $g$, we have a.e.\ the inequality
$$g(\det \D v) \geq g(\det \D u) + g'(c)( \det \D v - \det \D u).$$
The claim now follows by integrating this inequality over $\Omega$,  since $\int_\Omega \det \D v = \int_\Omega \det \D u$ as $\det$ is a null Lagrangian \cite[Theorem 2.3]{Muller1999a} and $u-v\in W^{1,\infty}_0(\Omega)$.
\end{proof}

\section{Approximate solutions}\label{sec:approx}

In this section we prove Theorem \ref{thm:cpt}. In fact, we will reformulate our problem in a slightly different way, using the language of differential inclusions.   Without loss of generality we assume that $\Omega\subset \R^2$ is simply connected, and we can then rewrite \eqref{eq:EL} as
\begin{equation}
\label{eq:potentialmain}
g'(\det \D u) \D u = \D v
\end{equation}
for some $v\in W^{1,\infty}(\Omega,\R^2)$.  This last equation, in turn, is nothing but the differential inclusion
\begin{equation}
\label{eq:diffinc}
\D w\in K_g \quad \tp{a.e.\ in }\Omega,  \qquad w\equiv (u,v)\colon \Omega\to \R^4,
\end{equation}
where $K_g$ is defined in \eqref{eq:defK}. We begin this  section by recalling some well-known but useful results concerning Young measures, and their relation to differential inclusions.

\subsection{Differential inclusions and Young measures}

The results and discussion of this subsection are standard, and the reader can find proofs as well as further information in \cite{Muller1999a,Rindler2018}. We let $\Omega \subset \R^m$ be a domain.


\begin{definition}
A parametrized family of probability measures $\nu=(\nu_x)_{x\in \Omega}$ on $\R^{d}$ is said to be a \textit{Young measure} if there is a weakly-$\ast$ convergent sequence $(z_j)\subset L^{\infty}(\Omega,\R^d)$ which generates $\nu$, in the sense that
$$f(z_j) \wstar \left(x\mapsto \langle\nu_x, f\rangle \right) \quad \text{ in } L^\infty(\Omega),$$
for all $f\in C_0(\R^{d})$. We always write $\langle \nu_x, f\rangle \equiv \int_{\R^{d}} f(A) \d \nu_x(A)$ and $\overline{\nu_x}\equiv \langle\nu_x,\id\rangle$. 

A Young measure $(\nu_x)_{x\in \Omega}$ is \textit{homogeneous} if there is a probability measure $\nu$ in $\R^{d}$ such that $\nu_x=\nu$ for a.e.\ $x$ in $\Omega$; in this case, we naturally identify $(\nu_x)_{x\in \Omega}$ with $\nu$.  

If $d=n\times m$ and the sequence $(z_j)$ satisfies $z_j = \D w_j$ for some $(w_j)\subset W^{1,\infty}(\Omega,\R^n)$ then we say that $\nu$ is a \textit{gradient Young measure}.
\end{definition}

The set of homogeneous gradient Young measures supported on $K\subset \R^{n\times m}$ is denoted by $\mc M^\tp{qc}(K)$; whenever $K=\R^{n\times m}$ we suppress it from the notation.  The justification for the choice of the superscript ``qc'' will become clear at the end of the next theorem,  which collects some fundamental properties of Young measures.

\begin{theorem}\label{thm:props}
Young measures have the following properties:
\begin{enumerate}
\item \label{it:cpt}  {\normalfont Compactness:} if $(z_j)\subset L^{\infty}(\Omega,\R^d)$ is bounded then,  up to a non-relabeled subsequence, $(z_j)$ generates a Young measure.
\item \label{it:strong} {\normalfont Strong convergence:} assume   $(z_j)$ generates a Young measure $(\nu_x)_{x\in\Omega}$ and $z_j\wstar z$ in $L^\infty(\Omega,\R^d)$.  Then $z_j\to z$ in $L^p$ for all $p<\infty$ if and only if $\nu_x=\delta_{z(x)}$ for a.e. $x\in \Omega$.
\item \label{it:support} {\normalfont Support:} assume $(z_j)$ generates a Young measure $(\nu_x)_{x\in \Omega}$. For a compact subset $K\subset \R^d$ we have $\dist(z_j,K)\to 0$ in $L^1(\Omega)$ if and only if  $\supp \nu_x\subseteq K$ for a.e.\ $x\in \Omega$.\end{enumerate}
Gradient Young measures have the following additional properties:
\begin{enumerate}[resume]
\item \label{it:bc} {\normalfont Boundary conditions:} if $(\nu_x)_{x\in \Omega}$ is a gradient Young measure then it is generated by a sequence $(\D w_j)$ with $\supp(w_j-w)\Subset \Omega$, where $\D w(x)\equiv \overline{\nu_x}$.
\item \label{it:loc}  {\normalfont Localization:} if $(\nu_x)_{x\in \Omega}$ is a gradient Young measure then $\nu_x\in \mc M^\tp{qc}$ for a.e.\ $x\in \Omega$.
\item \label{it:duality}{\normalfont Duality with quasiconvexity:} a probability measure $\nu$ supported on a compact subset $K\subset \R^{n\times m}$ is in $\mc M^\tp{qc}(K)$ if and only if $f(\overline \nu) \leq \langle f, \nu\rangle$ for all quasiconvex integrands $f$.
\end{enumerate}
\end{theorem}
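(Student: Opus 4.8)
\emph{Overview and soft part.} The plan is to treat items \ref{it:cpt}--\ref{it:support} as instances of the fundamental theorem of Young measures --- a soft functional-analytic statement --- and items \ref{it:bc}--\ref{it:duality} as the rigidity properties of gradients, with \ref{it:duality} being where the real work lies. The first step is to realise generated Young measures as weak-$\ast$ limits: one identifies a bounded sequence $(z_j)\subset L^\infty(\Omega,\R^d)$ with the maps $x\mapsto\delta_{z_j(x)}$, which lie in the unit ball of the space $L^\infty_w(\Omega;\mc M(\R^d))$ of weak-$\ast$ measurable families of finite Radon measures, itself the dual of the separable Banach space $L^1(\Omega;C_0(\R^d))$. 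Sequential weak-$\ast$ compactness then yields \ref{it:cpt}, once one checks that a limit $(\nu_x)$ consists of probability measures: positivity passes to the limit, and no mass escapes to infinity because of the uniform $L^\infty$ bound, so $f(z_j)\wstar\langle\nu_\cdot,f\rangle$ for $f\in C_0(\R^d)$. The one refinement used repeatedly is that, thanks to this $L^\infty$ bound, the convergence persists for every bounded continuous $f$, obtained by truncating $f$ outside a large ball. Granting this, for \ref{it:strong} I would test with $A\mapsto|A-z(x)|^2$ truncated above $\|z\|_{L^\infty}+\sup_j\|z_j\|_{L^\infty}$, getting $\int_\Omega|z_j-z|^2\d x\to\int_\Omega\langle\nu_x,|\cdot-z(x)|^2\rangle\d x$, which vanishes exactly when $\nu_x=\delta_{z(x)}$; interpolation with the $L^\infty$ bound then upgrades $L^2$ to $L^p$ for $p<\infty$, and the converse follows by extracting an a.e.-convergent subsequence. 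Item \ref{it:support} is analogous: if $\dist(z_j,K)\to0$ in $L^1$ then any $f\in C_0(\R^d)$ vanishing on a neighbourhood of $K$ has $\|f(z_j)\|_{L^1}\to0$, forcing $\supp\nu_x\subseteq K$, while the converse comes from testing with $\dist(\cdot,K)$ truncated at a large height.

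\emph{Rigidity of gradients.} For \ref{it:bc}, one notes that $\D w(x)\equiv\overline{\nu_x}$ is indeed a gradient (a weak-$\ast$ limit of the curl-free fields $\D w_j$), and after replacing $w_j$ by $w_j-w$ one may assume $w=0$, so that $w_j\to0$ uniformly by the compact embedding $W^{1,\infty}(\Omega)\hookrightarrow C^0(\overline\Omega)$. Choosing cut-offs $\varphi_k\in C^\infty_c(\Omega,[0,1])$ with $\{\varphi_k=1\}\uparrow\Omega$ and $|\Omega\setminus\{\varphi_k=1\}|\to0$, one has $\D(\varphi_k w_j)=\varphi_k\,\D w_j+w_j\otimes\n\varphi_k$ with the last term vanishing uniformly as $j\to\infty$ for $k$ fixed, so $(\D(\varphi_k w_j))_j$ generates a Young measure coinciding with $\nu$ on $\{\varphi_k=1\}$ and with $\delta_0$ off $\supp\varphi_k$; these tend to $\nu$ as $k\to\infty$ (the integrands are bounded and the sets where they differ become null in the limit), and a diagonal choice $k=k(j)$ gives the sought sequence with $\supp(w_{j,k(j)}-w)\Subset\Omega$. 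For \ref{it:loc} I would run the localization (blow-up) argument: viewing $x\mapsto\nu_x$ as a measurable map into a metrizable convex space of probability measures, almost every $x_0$ is a Lebesgue point, and rescaling a generating sequence around $x_0$ at scales $r\downarrow0$, then diagonalizing in $(j,r)$, produces a sequence generating the homogeneous measure $\nu_{x_0}$ with affine barycenter, i.e.\ $\nu_{x_0}\in\mc M^{\tp{qc}}$.

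\emph{Duality, the easy implication.} Given $\nu\in\mc M^{\tp{qc}}(K)$ with $K$ compact, by \ref{it:support} together with a truncation lemma one may take a uniformly bounded generating sequence $(\D w_j)$, and by \ref{it:bc} may further arrange $\supp(w_j-L_{\overline\nu})\Subset\Omega$. Then for any quasiconvex $f$ the definition of quasiconvexity gives $\fint_\Omega f(\D w_j)\geq f(\overline\nu)$, and passing to the limit (via the bounded-continuous version of the convergence above) yields $\langle f,\nu\rangle\geq f(\overline\nu)$.

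\emph{Duality, the main obstacle.} The converse --- the Kinderlehrer--Pedregal characterisation --- is the genuinely hard part, and I would prove it by Hahn--Banach separation. One first shows that $\mc M^{\tp{qc}}_{\overline\nu}(K)$, the set of homogeneous gradient Young measures supported on $K$ with barycenter $\overline\nu$, is a convex, weak-$\ast$ closed subset of the probability measures on $K$ --- convexity by gluing rescaled generating sequences across a fine grid of cubes with matching affine boundary data, closedness by a diagonal argument using metrizability. If $\nu$ did not lie in this set, Hahn--Banach would furnish $\phi\in C(K)$ with $\langle\nu,\phi\rangle<\inf\{\langle\mu,\phi\rangle:\mu\in\mc M^{\tp{qc}}_{\overline\nu}(K)\}$; one then invokes the relaxation theorem, identifying this infimum with $Q_K\phi(\overline\nu)$, where $Q_K\phi\le\phi$ is the $K$-constrained quasiconvex envelope of $\phi$ and is itself quasiconvex. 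Applying the hypothesis on $\nu$ to the quasiconvex function $Q_K\phi$ then gives $\langle\nu,\phi\rangle\geq\langle\nu,Q_K\phi\rangle\geq Q_K\phi(\overline\nu)$, a contradiction. The hard steps are precisely the two just invoked --- the convex/weak-$\ast$ closed structure of $\mc M^{\tp{qc}}_{\overline\nu}(K)$ and the relaxation theorem with a \emph{quasiconvex} (rather than merely convex) envelope --- and I would not reprove them here, referring instead to \cite{Muller1999a,Rindler2018} for the complete arguments.
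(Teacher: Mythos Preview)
The paper does not actually prove this theorem: the paragraph preceding it declares the results standard and defers entirely to \cite{Muller1999a,Rindler2018}. Your sketch is correct and tracks the standard arguments in those references --- the fundamental theorem of Young measures for \ref{it:cpt}--\ref{it:support}, cut-off plus diagonalization for \ref{it:bc}, blow-up for \ref{it:loc}, and the Kinderlehrer--Pedregal characterization via Hahn--Banach and relaxation for \ref{it:duality} --- so you are aligned with the paper in spirit, just more explicit.

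One small technical remark: in \ref{it:strong} you test with the $x$-dependent integrand $A\mapsto|A-z(x)|^2$, which goes beyond the definition in the paper (convergence for $f\in C_0(\R^d)$ only) and requires the extension of Young-measure convergence to Carath\'eodory integrands; this is routine under the uniform $L^\infty$ bound but is a step you have not quite justified with the ``bounded continuous $f$'' refinement you state, since that refinement is still $x$-independent. Likewise, in the hard direction of \ref{it:duality}, the object you call the ``$K$-constrained quasiconvex envelope'' is not quite a standard notion; the usual argument extends $\phi$ from $K$ to $\R^{n\times m}$ and works with the ordinary quasiconvex envelope $Q\phi$, checking that it is finite and hence quasiconvex. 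These are cosmetic points, and since you (like the paper) ultimately defer the hard steps to \cite{Muller1999a,Rindler2018}, there is no genuine gap.
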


We recall that an integrand $f\colon \R^{n\times m}\to \R$ is said to be \textit{quasiconvex} if
\begin{equation}
\label{eq:qc}
f(A)\leq \fint_\Omega f(A+\D \varphi) \d x \quad \text{for all } A\in \R^{n\times m}, \text{ all } \varphi \in C^\infty_c(\Omega,\R^n).
\end{equation}
A typical example of quasiconvex integrands are the minors, which in fact satisfy \eqref{eq:qc} with equality. This implies that, for any $\nu \in \mc M^\tp{qc}$ and any minor $M$, we have
\begin{equation}
\label{eq:minors}
M(\overline \nu) = \langle \nu, M\rangle.
\end{equation}
For a compact set $K\subset \R^{n\times m}$ we can define its \textit{quasiconvex hull} $K^\tp{qc}$ as the set of points which cannot be separated from $K$ by a quasiconvex function; a set is said to be quasiconvex if $K=K^\tp{qc}$.  By \cite[Theorem 4.10]{Muller1999a},  the quasiconvex hull coincides with 
\begin{equation}
\label{eq:Kqc}
K^\tp{qc}= \{\overline \nu: \nu \in \mc M^\tp{qc}(K)\}.
\end{equation}
Whenever $K$ is non-compact,  we set $K^\tp{qc}\equiv \bigcup_{r>0} [K\cap \overline{B_r(0)}]^\tp{qc}$.

%
%

\subsection{Stability and compactness of approximate solutions}

We now return to the case $m=4$, $n=2$ and to the set  $K_g$ defined in \eqref{eq:defK}. We begin with some technical lemmas:

\begin{lemma}\label{lem:dist}
Let $\Omega \subset \R^2$ be a simply connected domain. Let $(u_j)\subset W^{1,\infty}(\Omega,\R^2)$ be a bounded sequence and $g\in C^1(\R)$. The following are equivalent:
\begin{enumerate}
\item\label{it:wj} there is $(v_j)\subset W^{1,\infty}(\Omega,\R^2)$ bounded with $\dist(\D w_j,K_g) \to 0$ in $L^1$,  where $w_j= (u_j,v_j)$;
\item\label{it:vj} there is $(v_j)\subset W^{1,\infty}(\Omega,\R^2)$ bounded such that $g'(\det \D u_j) \D u_j- \D v_j\to 0$ in $L^1$;
\item\label{it:fj}  \eqref{eq:approxEL} holds for a bounded sequence $(F_j)\subset L^\infty(\Omega,\R^{2\times 2})$.
\end{enumerate}
\end{lemma}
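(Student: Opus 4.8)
The plan is to prove the two equivalences \ref{it:wj}$\,\Leftrightarrow\,$\ref{it:vj} and \ref{it:vj}$\,\Leftrightarrow\,$\ref{it:fj} separately. Throughout I abbreviate $a_j\equiv g'(\det\D u_j)$ and write $\Lambda$ for a uniform Lipschitz bound for $(u_j)$; since $(u_j)$ is bounded in $W^{1,\infty}$ the Jacobians $\det\D u_j$ are uniformly bounded, so by continuity of $g'$ the sequence $(a_j)$ is bounded in $L^\infty(\Omega)$. The two tools I would use are: (a) the pointwise identity $\cof(A)^T=JAJ^T$, which exhibits $A\mapsto\cof(A)^T$ as a linear isometry of $\R^{2\times2}$ commuting with scalar multiplication; and (b) its integrated version on the simply connected domain $\Omega$: a field $M\in L^\infty(\Omega,\R^{2\times2})$ satisfies $\ddiv M=0$ in $\mc D'(\Omega)$ if and only if $M=\cof(\D v)^T$ for some $v\in W^{1,\infty}(\Omega,\R^2)$. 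One implication of (b) is the Piola identity \eqref{eq:divcof}; the converse is the Poincar\'e lemma applied to each row of $M$ (a bounded divergence-free planar vector field, hence a rotated gradient of a $W^{1,\infty}$ potential), after undoing the rotations.

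Granting (a) and (b), the equivalence \ref{it:vj}$\,\Leftrightarrow\,$\ref{it:fj} is straightforward. If \ref{it:vj} holds, put $G_j\equiv a_j\D u_j-\D v_j$, so $G_j\to0$ in $L^1$ and $(G_j)$ is bounded in $L^\infty$; conjugating the identity $a_j\D u_j=\D v_j+G_j$ by $J$ gives $a_j\cof(\D u_j)^T=\cof(\D v_j)^T+JG_jJ^T$, and since $\ddiv(\cof(\D v_j)^T)=0$ by \eqref{eq:divcof} this is precisely \eqref{eq:approxEL} with $F_j\equiv JG_jJ^T$ (note $|F_j|=|G_j|$). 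Conversely, if \ref{it:fj} holds then $\ddiv(a_j\cof(\D u_j)^T-F_j)=0$, so by (b) there is $v_j\in W^{1,\infty}(\Omega,\R^2)$ with $\cof(\D v_j)^T=a_j\cof(\D u_j)^T-F_j$; as $|\cof(\D v_j)^T|=|\D v_j|$, $|\cof(\D u_j)^T|=|\D u_j|\le\Lambda$ and $(a_j),(F_j)$ are bounded in $L^\infty$, the sequence $(\D v_j)$ is bounded in $L^\infty$ and, after normalising the additive constants, $(v_j)$ is bounded in $W^{1,\infty}$. Using that $\cof(\cdot)^T$ is linear and commutes with scalars, whence $a_j\cof(\D u_j)^T=\cof(a_j\D u_j)^T$, one applies the inverse isometry $C\mapsto J^TCJ$ to conclude $a_j\D u_j-\D v_j=J^TF_jJ\to0$ in $L^1$, which is \ref{it:vj}.

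Finally I turn to \ref{it:wj}$\,\Leftrightarrow\,$\ref{it:vj}. The implication \ref{it:vj}$\,\Rightarrow\,$\ref{it:wj} is immediate, since the $\R^{4\times2}$-matrix with upper block $\D u_j$ and lower block $a_j\D u_j$ lies in $K_g$, whence $\dist(\D w_j,K_g)\le|\D v_j-a_j\D u_j|\to0$ in $L^1$. The reverse implication is the one delicate point, and it is exactly where the uniform $W^{1,\infty}$-bounds on $(u_j)$ and $(v_j)$ are needed: the parametrisation $A\mapsto(A,\,g'(\det A)A)$ of $K_g$ is proper but not bi-Lipschitz, so $\dist(\D w_j,K_g)$ cannot be compared to $|a_j\D u_j-\D v_j|$ by a global estimate. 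To handle this I would set $\delta_j\equiv\dist(\D w_j,K_g)$, choose measurably for each $x$ a matrix $A_j(x)$ realising $\delta_j(x)$ up to the error $1/j$, and decompose
\[
a_j\D u_j-\D v_j=\bigl(g'(\det\D u_j)\D u_j-g'(\det A_j)A_j\bigr)+\bigl(g'(\det A_j)A_j-\D v_j\bigr),
\]
where the second bracket has $L^1$-norm at most $\|\delta_j+1/j\|_{L^1}\to0$. For the first bracket I would split $\Omega$ into the set $\{\delta_j\le1\}$, on which $|A_j|,|\D u_j|\le\Lambda+2$ so that uniform continuity of $A\mapsto g'(\det A)A$ on $\overline{B_{\Lambda+2}(0)}$ bounds the bracket by $\omega(\delta_j+1/j)$ ($\omega$ the modulus, and $\omega(\delta_j+1/j)\to0$ in $L^1$ on that set since $\delta_j\to0$ in measure while the values are uniformly bounded), and its complement, whose measure tends to $0$ by Chebyshev and on which the bracket is bounded in $L^\infty$ up to an additive $\delta_j$. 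Hence $a_j\D u_j-\D v_j\to0$ in $L^1$ with the same sequence $(v_j)$, giving \ref{it:vj}. The only genuinely fiddly part of the whole argument is this last measure-theoretic step — the measurable near-selection of $A_j$ and the good/bad set decomposition; the rest is the linear algebra of $\cof$ in the plane together with the Poincar\'e lemma.
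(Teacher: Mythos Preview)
Your proof is correct and follows the same route as the paper. The treatment of \ref{it:vj}$\,\Leftrightarrow\,$\ref{it:fj} is identical (linearity of $\cof$ in the plane, Piola identity, and the Poincar\'e lemma on a simply connected domain). For \ref{it:wj}$\,\Leftrightarrow\,$\ref{it:vj} the paper simply invokes the general fact that for a sequence bounded in $L^\infty$ one has $\dist(\D w_j,K_g)\to0$ in $L^1$ if and only if $f(\D w_j)\to0$ in $L^1$ for any continuous $f$ with $\{f=0\}=K_g$, and then takes $f(A,B)=|g'(\det A)A-B|$; your good-set/bad-set argument with the modulus of continuity of $A\mapsto g'(\det A)A$ on a fixed ball is exactly the proof of that general fact specialised to this $f$, so the two arguments coincide once unpacked.

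One small simplification: the measurable selection of $A_j$ is not actually needed. Your decomposition yields, for each $x$ separately, the pointwise bound
\[
|a_j\D u_j-\D v_j|(x)\le \omega\bigl(\delta_j(x)\bigr)+\delta_j(x)\quad\text{on }\{\delta_j\le1\},\qquad |a_j\D u_j-\D v_j|(x)\le C\quad\text{on }\{\delta_j>1\},
\]
where $C$ depends only on the uniform $W^{1,\infty}$ bounds; since the left-hand side is automatically measurable, integrating these pointwise inequalities already gives the conclusion, and the ``fiddly'' selection step can be dropped.
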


\begin{proof}
For a bounded sequence $(w_j)\subset W^{1,\infty}(\Omega,\R^4)$,  one has $\dist(\D w_j, K_g)\to 0$ in $L^1(\Omega)$  if and only if $f(\D w_j)\to 0$ in $L^1(\Omega)$, where $f\colon\R^{4\times 2}\to \R$ is any continuous function such that $\{f=0\}=K_g$, see e.g.\ \cite{Muller1999a} for a similar argument.   In particular,  taking
$$f\colon \left(\begin{matrix}
A \\ B
\end{matrix}\right)\mapsto \lvert g'(\det A) A-B\rvert,$$
we obtain the equivalence between  \ref{it:wj} and  \ref{it:vj}. Note also that, 
by linearity of the cofactor,  \ref{it:vj} is equivalent to
$$ g'(\det \D u_j) \cof(\D u_j)^T- \cof(\D v_j)^T \to 0 \text{ in } L^1(\Omega).$$
In particular, if this convergence holds, then by \eqref{eq:divcof} we may take $F_j$ to be the quantity above, proving that \ref{it:vj}$\implies$\ref{it:fj}.  The converse follows from the fact that, if \eqref{eq:approxEL} holds, then 
$$g'(\det \D u_j) \cof(\D u_j)^T- F_j $$ 
is a bounded divergence-free matrix field, which can therefore be written as $\cof(\D v_j)^T$ for some Lipschitz $v_j\colon \Omega\to \R^2$, since $\Omega$ is simply connected. Then the convergence in \ref{it:vj} holds since $F_j\to 0$ in $L^1(\Omega)$.
\end{proof}

\begin{lemma}\label{lem:topolo}
Consider domains $\Omega_2 \Subset \Omega_1 \Subset \Omega$.  Let $A\in \R^{2\times 2}$ be non-singular and assume that
\begin{equation}
\label{eq:hyppre}
u_j \to  L_A \text{ in } C^0(\overline \Omega), \qquad u_j = L_A \text{ on } \p \Omega.
\end{equation}
There exists $j_0 = j_0(\Omega_1,\Omega_2) > 0$ such that following hold:
\begin{enumerate}
\item $u_j(\Omega_2)\Subset L_A(\Omega_1)\Subset L_A(\Omega)$, for all $j \ge j_0$;\label{pro:fi}
\item for all $y\in u_j(\Omega_2)$ we have $u_j^{-1}(y)\subseteq \Omega_1$, for all $j \ge j_0$;\label{pro:se}
\item $\lim_{j}|u_j(\Omega_2)| = |L_A(\Omega_2)|$.\label{pro:thi}
\end{enumerate}
\end{lemma}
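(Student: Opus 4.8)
The plan is to deduce everything from uniform convergence $u_j \to L_A$ together with the fact that $L_A$ is a linear homeomorphism, using degree theory to control the images $u_j(\Omega_2)$. I would set $\delta \equiv \tfrac12 \dist(L_A(\overline{\Omega_2}), \partial L_A(\Omega_1)) > 0$, which is positive since $L_A(\overline{\Omega_2})$ is a compact subset of the open set $L_A(\Omega_1)$ (here I use that $A$ is non-singular, so $L_A$ is an open map). Because $u_j \to L_A$ uniformly on $\overline\Omega$, there is $j_0 = j_0(\Omega_1,\Omega_2)$ such that $\|u_j - L_A\|_{C^0(\overline\Omega)} < \delta$ for all $j \ge j_0$. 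Then for $x \in \overline{\Omega_2}$ we have $u_j(x) \in B_\delta(L_A(x)) \subset L_A(\Omega_1)$, and moreover $\overline{u_j(\Omega_2)} \subset \{y : \dist(y, L_A(\overline{\Omega_2})) \le \delta\} \Subset L_A(\Omega_1) \Subset L_A(\Omega)$; shrinking if necessary we may also assume $\overline{L_A(\Omega_1)} \Subset L_A(\Omega)$ since $\Omega_1 \Subset \Omega$. This gives \ref{pro:fi}.

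For \ref{pro:se}, fix $y \in u_j(\Omega_2)$ and suppose for contradiction that $x \in u_j^{-1}(y)$ with $x \in \Omega \setminus \Omega_1$. On $\partial\Omega$ we have $u_j = L_A$, so $u_j(x) = y$ forces $x \notin \partial\Omega$ unless $y \in L_A(\partial\Omega)$; but from \ref{pro:fi} we know $y \in u_j(\Omega_2) \subset L_A(\Omega_1)$, which is disjoint from $L_A(\partial\Omega) = L_A(\overline\Omega) \setminus L_A(\Omega)$ closure considerations since $\Omega_1 \Subset \Omega$. Hence $x$ lies in the open set $\Omega \setminus \overline{\Omega_1}$. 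The cleanest argument: pick a point $x_0 \in \Omega_2$ with $u_j(x_0) = y$ and consider the segment homotopy $H_t = (1-t)u_j + t L_A$ on $\overline{\Omega \setminus \Omega_1}$; since $u_j = L_A$ on $\partial\Omega$ and $\|u_j - L_A\|_{C^0} < \delta$, no point of $\partial(\Omega\setminus\Omega_1) = \partial\Omega \cup \partial\Omega_1$ is mapped to $y$ by any $H_t$ — on $\partial\Omega$ because $H_t = L_A$ there and $y \notin L_A(\partial\Omega)$, on $\partial\Omega_1$ because $\dist(y, L_A(\partial\Omega_1)) \ge \delta > \|u_j - L_A\|_{C^0}$ while simultaneously $y \in L_A(\Omega_1)$. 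So $\deg(u_j, \Omega\setminus\Omega_1, y) = \deg(L_A, \Omega\setminus\Omega_1, y) = 0$ because $y \notin L_A(\Omega\setminus\overline{\Omega_1})$ and $L_A$ is injective; degree zero and openness of $\Omega\setminus\overline{\Omega_1}$ prevent a solution there (more carefully, any solution in the interior would have to be cancelled, but since $y \notin L_A(\overline{\Omega\setminus\Omega_1})$ a small-perturbation/excision argument shows $u_j^{-1}(y) \cap (\Omega\setminus\overline{\Omega_1}) = \emptyset$). Thus $u_j^{-1}(y) \subseteq \overline{\Omega_1}$, and since $u_j = L_A$ is injective on $\partial\Omega_1$ with $L_A(\partial\Omega_1)$ disjoint from $u_j(\Omega_2) \ni y$, we in fact get $u_j^{-1}(y) \subseteq \Omega_1$, which is \ref{pro:se}.

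For \ref{pro:thi}, I would use the area/degree formula: for a Lipschitz map $u_j$ and $y$ a.e.\ in $\R^2$, $\deg(u_j,\Omega_2,y) = \sum_{x \in u_j^{-1}(y)\cap\Omega_2} \sign\det\D u_j(x)$, and $|u_j(\Omega_2)| = \int_{\R^2} \mathbbm{1}_{u_j(\Omega_2)}(y)\,\d y$. Combined with $\int_{\R^2}\deg(u_j,\Omega_2,y)\,\d y = \int_{\Omega_2}\det\D u_j\,\d x$ and the observation that by \ref{pro:fi}–\ref{pro:se} the degree $\deg(u_j,\Omega_2,\cdot)$ is supported in $u_j(\Omega_2)$ and there equals the (fixed, nonzero) value $\sign\det A$ on the set $L_A(\Omega_2)$ by homotopy invariance, one gets $|u_j(\Omega_2)| \ge |L_A(\Omega_2)|$ for $j$ large; the reverse bound $\limsup_j|u_j(\Omega_2)| \le |L_A(\overline{\Omega_2})| + o(1) = |L_A(\Omega_2)|$ follows from $\overline{u_j(\Omega_2)} \subset \{\dist(\cdot, L_A(\overline{\Omega_2}))\le \|u_j - L_A\|_{C^0}\}$ and continuity of Lebesgue measure (using $|\partial(L_A\Omega_2)| = |L_A(\partial\Omega_2)| = |\det A||\partial\Omega_2| = 0$). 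I expect the main obstacle to be the careful degree-theoretic bookkeeping in \ref{pro:se}–\ref{pro:thi} — in particular verifying that the local degree of $u_j$ at points of $u_j(\Omega_2)$ is exactly $\sign\det A$ and that no preimages escape $\Omega_1$ — since the maps $u_j$ are merely Lipschitz, not smooth or injective, so one must rely on the topological (Brouwer) degree and its homotopy invariance through the segment homotopy to $L_A$, which is legitimate precisely because the boundary values are pinned.
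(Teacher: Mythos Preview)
Your arguments for \ref{pro:fi} and \ref{pro:thi} are essentially correct and aligned with the paper. For \ref{pro:thi} the paper also gets the upper bound from $u_j(\Omega_2)\subset B_\eps(L_A(\Omega_2))$, and for the lower bound it shows $L_A(\Omega_3)\subseteq u_j(\Omega_2)$ for every $\Omega_3\Subset\Omega_2$ (via degree plus a compactness/contradiction argument rather than your direct homotopy computation, but the two are interchangeable). One minor imprecision: your homotopy-invariance step giving $\deg(u_j,\Omega_2,y)=\sign\det A$ is only valid for $y\in L_A(\Omega_3)$ with $\Omega_3\Subset\Omega_2$, since you need $y$ bounded away from $H_t(\partial\Omega_2)$; this is fixed by letting $\Omega_3\nearrow\Omega_2$, exactly as the paper does.

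There is, however, a genuine gap in your proof of \ref{pro:se}. You correctly obtain $\deg(u_j,\Omega\setminus\overline{\Omega_1},y)=0$, but then assert that this ``prevents a solution there''. Degree zero does \emph{not} rule out preimages: a Lipschitz map can have solutions whose local orientations cancel. Your parenthetical ``small-perturbation/excision argument'' does not repair this --- excision lets you localize the degree, but the localized degree is still zero and still says nothing about the existence of preimages. The correct argument is purely metric (which is why the paper calls \ref{pro:fi} and \ref{pro:se} ``much simpler'' than \ref{pro:thi}): if $x\in\overline\Omega\setminus\Omega_1$ satisfies $u_j(x)=y\in u_j(\Omega_2)$, then on the one hand $|L_A(x)-y|\le\|u_j-L_A\|_{C^0}<\delta$, while on the other hand $\dist(y,L_A(\overline{\Omega_2}))<\delta$ and $L_A(x)\notin L_A(\Omega_1)$ give
\[
|L_A(x)-y|\ge \dist\big(\R^2\setminus L_A(\Omega_1),\,L_A(\overline{\Omega_2})\big)-\delta
=\dist\big(\partial L_A(\Omega_1),\,L_A(\overline{\Omega_2})\big)-\delta=2\delta-\delta=\delta,
\]
a contradiction. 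No degree theory is needed for \ref{pro:se}.
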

\begin{proof}
We will just prove \ref{pro:thi},  as \ref{pro:fi} and \ref{pro:se} are much simpler.  
To prove \ref{pro:thi},  note that by uniform convergence, for all $\e > 0$, there exists $j_1$ such that
\begin{equation}\label{eq:epsn}
u_j(\Omega_2) \subset B_\eps(L_A(\Omega_2)),\quad \forall j \ge j_1,
\end{equation}
where $B_\eps(E)$ denotes the $\eps$ neighborhood of the set $E \subset \R^2$. 
In turn, \eqref{eq:epsn} implies
\begin{equation}\label{eq:limsup}
\limsup_{j\to \infty }|u_j(\Omega_2)| \le |L_A(\Omega_2)|,
\end{equation}
since $|\partial \Omega_2| = 0$. To show the opposite inequality, consider any domain $\Omega_3 \Subset \Omega_2$. We claim that there exists $j_2 = j_2(\Omega_3) > 0$ such that if $j \ge j_2$ then
 \begin{equation}\label{eq:epsnb}
L_A(\Omega_3) \subseteq u_j(\Omega_2).
\end{equation}
To see this, we start by noticing that, for all $j \in \N$,
 \begin{equation}\label{eq:weaker}
L_A(\Omega_3) \subseteq u_j(\Omega).
\end{equation}
This can be seen using the properties of the topological degree, see e.g.\ \cite[Theorems 2.1 and 2.4]{Fonseca1995}, exploiting the fact that $u_j = L_A$ on $\partial \Omega$ for all $j$. Suppose now that \eqref{eq:epsnb} is false, so that, up to non-relabeled subsequences, we find $y_j \in L_A(\Omega_3)$ with $y_j \notin u_j(\Omega_2)$ for all $j$. We can additionally suppose that $y_j \to \bar y \in L_A(\overline \Omega_3)$. Due to \eqref{eq:weaker}, we see that there exists a sequence $(x_j) \subset \Omega$ such that $u_j(x_j) = y_j$. Our contradiction assumption implies $x_j \notin \Omega_2$ for all $j$. In particular, up to a further subsequence, we see that $x_j \to \bar x \in \overline{\Omega\setminus\Omega_2} \subset \overline{\Omega}\setminus \Omega_2.$ As $\overline{\Omega_3} \subset \Omega_2$ and $A$ is nonsingular, the uniform convergence of $u_j\to L_A$ readily yields a contradiction. Thus, \eqref{eq:epsnb} holds, and we deduce in turn that
\[
|L_A(\Omega_3)| \le \liminf_j|u_j(\Omega_2)|.
\]
Due to the arbitrarity of $\Omega_3$,  combining the latter with  \eqref{eq:limsup} we conclude the validity of \ref{pro:thi}.
\end{proof}

We can now show the main result of this section:

\begin{theorem}\label{thm:mainapprox}
Let $\Omega \subset \R^2$ be a simply connected domain and let $(u_j),(v_j)$ be sequences bounded in $W^{1,\infty}(\Omega,\R^2)$. Let $A,B\in \R^{2\times 2}$ and assume the following conditions:
\begin{align}
\label{eq:hyp}
\begin{split}
&u_j = L_A \text{ and } v_j = L_B \text{ on } \p \Omega, \\
&u_j \wstar L_A \text{ and } v_j \wstar L_B \text{ in } W^{1,\infty},\\
&(\D u_j,\D v_j) \text{ generates } \nu \in \mc M^\tp{qc}(K_g),\\
&g'(\det \D u_j)\D u_j - \D v_j \to 0 \text{ in }L^1.
\end{split}
\end{align}
Then  we have
$$\supp \nu\subseteq \bigg\{\left(\begin{matrix}
M\\N
\end{matrix}\right)\in K_g: \det M = \det A\bigg\}$$
and, in particular,   $\det \D u_j \to \det A \tp{ in }L^1$ and $(A,B)=\overline \nu\in  K_g$.
\end{theorem}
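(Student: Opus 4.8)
The plan is to reduce the statement to the one–dimensional assertion that $\sigma\equiv(\det\circ\,\pi_1)_\#\nu=\delta_{\det A}$, where $\pi_1\colon\R^{4\times2}\to\R^{2\times2}$ projects onto the first block, and then to prove it by combining degree theory (Lemma \ref{lem:topolo}), Allard's strong constancy lemma, and the $0$–$1$ law for quasiregular gradient Young measures. For the reduction: the minor relation \eqref{eq:minors} applied to $M\mapsto\det M$ yields $\int_\R t\,\d\sigma(t)=\det A$ (using $\D u_j\wstar A$, so $\langle\nu,M\rangle=A$). Since $\nu\in\mc M^{\tp{qc}}(K_g)$ we have $\supp\nu\subseteq K_g$, and as $\{(M,N):\det M\neq\det A\}$ is open, the inclusion in the statement is equivalent to $\sigma=\delta_{\det A}$. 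Granting this, $\det\D u_j$ generates the homogeneous measure $\delta_{\det A}$ and converges weakly–$*$ to $\det A$, hence $\det\D u_j\to\det A$ in every $L^p$, $p<\infty$, by Theorem \ref{thm:props}\ref{it:strong}; and on $\supp\nu$ one has $N=g'(\det M)M=g'(\det A)M$, so $\overline\nu=(\langle\nu,M\rangle,\,g'(\det A)\langle\nu,M\rangle)=(A,g'(\det A)A)\in K_g$, which must coincide with $(A,B)$ since $\D v_j\wstar B$. So it suffices to show $\sigma=\delta_{\det A}$; precomposing the $u_j$ with a fixed reflection and replacing $g(\cdot)$ by $g(-\cdot)$ (still strictly convex, normalized by \eqref{eq:g'0}) flips the sign of the Jacobian, so we may assume $\det A\ge0$.

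\textbf{Lower bound and the orientation–preserving limit.} Assume first $\det A>0$, so $A$ is nonsingular and, by the uniform Lipschitz bound and the boundary condition, $u_j\to L_A$ in $C^0(\overline\Omega)$; then Lemma \ref{lem:topolo} applies. For $\Omega_2\Subset\Omega$ the area formula gives $\int_{\Omega_2}|\det\D u_j|\,\d x\ge|u_j(\Omega_2)|$, whose right–hand side tends to $|L_A(\Omega_2)|=\det A\,|\Omega_2|$ by part \ref{pro:thi}; letting $\Omega_2\uparrow\Omega$ and using that $\int_\Omega|\det\D u_j|\,\d x\to|\Omega|\int_\R|t|\,\d\sigma$ (the integrand being a continuous function of $\D w_j$ and $\nu$ homogeneous), one gets $\int_\R|t|\,\d\sigma\ge\det A=|\int_\R t\,\d\sigma|$. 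The reverse inequality — equivalently $\int_\Omega(\det\D u_j)^-\,\d x\to0$, i.e.\ that the $u_j$ become orientation–preserving in the limit — is the decisive use of \eqref{eq:approxEL}: the sequence $w_j=(u_j,v_j)$ induces a family of image varifolds carrying the densities $|\det\D u_j|$, whose limit has the constant multiplicity of the current fixed by the boundary datum $L_A$ and whose first variation is controlled through $\ddiv F_j\to0$; Allard's strong constancy lemma \cite{Allard1986}, fed with the lower bound just obtained in place of the customary ``no mass gain'' hypothesis on balls, then excludes hidden multiplicity and yields the matching upper bound. Together with Jensen's inequality and $\det A>0$ this forces $\supp\sigma\subseteq[0,\infty)$, and in particular $(\det\D u_j)^-\to0$ in $L^1$.

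\textbf{Rigidity via truncation and the $0$–$1$ law.} It remains to promote $\supp\sigma\subseteq[0,\infty)$ to $\sigma=\delta_{\det A}$. For $s>0$, choose a suitable cutoff $\beta\in C^\infty_c(\R,[0,1])$ vanishing on $(-\infty,g'(s)]$ and equal to $1$ on $[g'(2s),\,1+\sup_j\|g'(\det\D u_j)\|_{L^\infty}]$; then $\beta(g'(\det\D u_j))$ is supported in $\{\det\D u_j>s\}$, where $|\D u_j|^2<(C^2/s)\det\D u_j$ with $C\equiv\sup_j\|\D u_j\|_{L^\infty}$. Invoking an approximate version of the renormalization property (Theorem \ref{thm:renorm}) along \eqref{eq:approxEL} — where the facts that $(\det\D u_j)^-\to0$ and that the relevant vector fields are divergence–free enter — the fields $\beta(g'(\det\D u_j))\D u_j$ admit, up to an $L^1$–error, a bounded Lipschitz potential that is $(C^2/s)$–quasiregular. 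Hence the homogeneous gradient Young measure $(\Psi)_\#\nu$, $\Psi(M,N)\equiv\beta(g'(\det M))M$, is supported on $\{P:|P|^2\le(C^2/s)\det P\}$, and Astala's $0$–$1$ law \cite{Astala2002} gives $(\Psi)_\#\nu(\{0\})=\nu(\{\det M\le s\})=\sigma((-\infty,s])\in\{0,1\}$. Since this holds for every $s>0$ and $\sigma((-\infty,s])\to1$ as $s\to\infty$, the non–decreasing function $s\mapsto\sigma((-\infty,s])$ is $\{0,1\}$–valued on $(0,\infty)$, so $\sigma$ is a Dirac mass; as $\int_\R t\,\d\sigma=\det A$, $\sigma=\delta_{\det A}$. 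When $\det A=0$ one runs the same truncation–and–$0$–$1$–law argument (together with its reflection, truncating from below) directly, concluding that $\sigma$ is a Dirac mass, hence $\sigma=\delta_0$. By the reduction, this proves the theorem.

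\textbf{Main obstacle.} I expect the hard part to be twofold. First, extracting from the merely approximate, \emph{degenerate} system \eqref{eq:approxEL} — which controls only the Jacobian, not $\D u_j$ itself — enough stationarity of the associated varifolds to bring Allard's strong constancy lemma to bear, and hence the orientation–preserving limit. Second, establishing that the renormalization property, and with it the existence of quasiregular truncated potentials, is stable along \eqref{eq:approxEL} rather than only along exact solutions (Theorem \ref{thm:renorm}); this is presumably where the sign information $(\det\D u_j)^-\to0$ is needed. Finally, since $g$ carries no symmetry, the regions $\{\det M>\det A\}$ and $\{\det M<\det A\}$ cannot be matched by reflection and each must be removed via the $0$–$1$ law — the subtlest structural point.
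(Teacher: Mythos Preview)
Your proposal assembles the right high-level ingredients (area formula and degree theory via Lemma~\ref{lem:topolo}, Allard's strong constancy lemma, the quasiregular $0$--$1$ law), but the architecture differs from the paper's in a way that leaves a genuine gap.

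The decisive issue is your reliance on an \emph{approximate renormalization} step: you need $\beta(g'(\det\D u_j))\,\D u_j$ to be, up to an $L^1$-error, the gradient of a bounded Lipschitz map, so that $(\Psi)_\#\nu$ is a \emph{gradient} Young measure and the $0$--$1$ law applies. You flag this yourself as an obstacle, and rightly so: Theorem~\ref{thm:renorm} (via \cite{Bianchini2016a}) gives renormalization only for \emph{exact} solutions of the continuity equation, and no stability of this property under $L^1$-perturbations of the right-hand side is available. The paper bypasses this entirely. Rather than truncating $\D u_j$, it observes that $\nu_{34}\equiv(\pi_{34})_\#\nu$, the Young measure of the \emph{given} potentials $\D v_j$, is already quasiregular once one knows $\supp\nu_{12}\subseteq\{\det=0\}\cup\{\det=\tilde c\}$, since on $K_g$ one has $N=g'(\det M)M$. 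The $0$--$1$ law is then applied to $\nu_{34}$ directly, with no renormalization needed.

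Your use of Allard's lemma is also different from the paper's and, as stated, not verifiable: you do not specify which nonnegative $L^1$ function on which domain has a gradient equal to the divergence of a vanishing measure. The paper is concrete here. Testing \eqref{eq:approxEL} with $\eta\circ u_j$ and changing variables via the area formula yields $\nabla S_{u_j}=\ddiv(P_j\alpha_j)$ in $\mathscr D'(L_A(\Omega))$, where $S_{u_j}(y)=\sum_{x\in u_j^{-1}(y)}|g'(\det\D u_j(x))|$ and $\|\alpha_j\|_{\mc M}\to0$; Allard's lemma then gives $S_{u_j}\to c$ in $L^1_\loc$. Combining this with a second test (by $M(u_j-L_A)$, $M\in\R^{2\times2}$ arbitrary) pins down $c$ in terms of $A,B$ and forces $\supp\nu_{12}\subseteq\{\det=0\}\cup\{\det=\tilde c\}$ --- the orientation information you are after is never isolated as a separate step. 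In particular, the case $\det A=0$ is handled by the paper without Allard or the $0$--$1$ law: the test by $M(u_j-L_A)$ alone gives $\int g'(\det M)\det M\,\d\nu_{12}=0$, hence $\det M=0$ for $\nu_{12}$-a.e.\ $M$ by \eqref{eq:g'(t)t}.
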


\begin{proof}
Note that \eqref{eq:hyp} implies that $(u_j,v_j)\to (L_A,L_B)$ in $C^0(\Omega)$,  so Lemma \ref{lem:topolo} is applicable.  We now define some notation.  Consider the  projections $\pi_{12} , \pi_{34}\colon \R^{4\times 2}\to \R^{2\times 2}$ defined by
$$\pi_{12} \colon \left(\begin{matrix}
A \\ B 
\end{matrix}\right)\mapsto A,\qquad 
\pi_{34} \colon \left(\begin{matrix}
A \\ B 
\end{matrix}\right)\mapsto B.
$$
Let $\nu_{12},\nu_{34}$ be the homogeneous gradient Young measures generated respectively by  $(\D u_j)$ and $(\D v_j)$; we have the pushforward relations $\nu_{12} = (\pi_{12})_\#\nu$ and $\nu_{34}=(\pi_{34})_\# \nu$. 
We split the rest of the proof into several steps.

\textbf{Step 1.} Let $(u_j),(v_j)$ be sequences satisfying the hypotheses in \eqref{eq:hyp}. By Lemma \ref{lem:dist} we see that \eqref{eq:approxEL} holds.  
Given $M\in \R^{2\times 2}$,  testing \eqref{eq:approxEL} against $\varphi_j(x)=M(u_j(x)-Ax)$ and using \eqref{eq:cof}, we see that
\begin{align*}
o(1) = \int_\Omega \langle \D \varphi_j,F_j \rangle & = \int_\Omega g'(\det \D u_j) \langle\D \varphi_j, \cof(\D u_j)^T\rangle \d x\\
& = \int_\Omega g'(\det \D u_j) \left[\det \D u_j \langle M, \Id \rangle - \langle M A, \cof(\D u_j)^T\rangle \right]\d x,
\end{align*}
where $o(1)$ denotes a quantity which vanishes as $j\to \infty$.  Equivalently, 
$$\Big\langle M, \Big(\int_\Omega g'(\det \D u_j) \det \D u_j \d x \Big)\Id \Big\rangle = \Big \langle M, \Big(\int_\Omega g'(\det \D u_j) \cof(\D u_j)^T \d x \Big) A^T\Big\rangle +o(1)$$
and, since $M$ is arbitrary,  we see that
\begin{align}
\label{eq:Bj}
\begin{split}
\lim_{j\to \infty} \Big(\int_\Omega g'(\det \D u_j) \det \D u_j \d x \Big) \Id & =\lim_{j\to \infty} \Big( \int_\Omega g'(\det \D u_j) \cof(\D u_j)^T \d x\Big) A^T\\
& = \lim_{j\to \infty} \Big(\int_\Omega \cof(\D v_j)^T \d x\Big) A^T\\
& = \cof(B)^T A^T,
\end{split}
\end{align}
where in the second line we used the convergence hypothesis in \eqref{eq:hyp} and in the last line we used the linearity of $\cof$ and the  boundary conditions on $v_j$. Similarly, we also have
\begin{equation}
\label{eq:detB}
\det B  = \fint_\Omega \det \D v_j  \d x = \lim_{j\to \infty} \fint_\Omega g'(\det \D u_j)^2 \det \D u_j \d x ,
\end{equation}
the first equality following because $\det$ is a null Lagrangian and the second by the last hypothesis in \eqref{eq:hyp}.

\textbf{Step 2.} In this step we deal with the case where $\det A =0$.  By taking the determinant in \eqref{eq:Bj} we get
\begin{equation*}
\int_{\R^{2\times 2}} g'(\det M) \det(M) \d \nu_{12}(M)  = \lim_{j\to \infty} \fint_\Omega g'(\det \D u_j) \det \D u_j \d x = 0,
\end{equation*}
thus $\det(M)=0$ for $\nu_{12}$-a.e.\ $M$, as $t\mapsto g'(t)t$ is non-negative and vanishes only at zero, according to \eqref{eq:g'(t)t}. Hence the desired claim on the support of $\nu_{12}$ holds and the latter claims follow exactly as in the case where $\det A\neq 0$,  which we will detail in Step 10 below.

\textbf{Step 3.}
For the rest of the proof we assume that $\det A\neq 0$, so that $L_A(\Omega)$ is a domain. 
 Let $\eta\in C^\infty_c(\R^2,\R^2)$ be such that $\eta|_{L_A(\partial \Omega)}=0$.  Thus $\eta\circ u_j\in W^{1,\infty}_0(\Omega,\R^2)$ and so, testing  \eqref{eq:approxEL} against this map and using \eqref{eq:cof} and \eqref{eq:g'(t)t}, we get
\begin{align}
\label{eq:areaform}
\begin{split}
 \int_\Omega \langle \D(\eta\circ u_j), F_j\rangle \d x & = \int_\Omega g'(\det \D u_j) \langle \D\eta \circ u_j \,\D u_j, \cof(\D u_j)^T\rangle \d x \\
&  = \int_\Omega g'(\det \D u_j) \ddiv \eta \circ u_j \det \D u_j \d x \\
& = \int_\Omega |g'(\det \D u_j)| (\ddiv \eta \circ u_j ) \lvert\det \D u_j\rvert \d x \\
& = \int_{\R^2} 1_{u_j(\Omega)}(y) S_{u_j}(y) \ddiv \eta(y) \d y,
\end{split}
\end{align}
where we used the area formula (cf.\ \cite[Corollary 8.11]{Maggi2012}) and we defined, for a.e.\ $y\in u_j(\Omega)$, 
\begin{equation}
\label{eq:defSj}
S_{u_j}(y) \equiv \sum_{x\in u_j^{-1}(y)} |g'(\det \D u_j(x))|.
\end{equation}
Let us note in passing that, since $(\D u_j)$ is bounded in $L^{\infty}$, we have
\begin{equation}
\label{eq:bddSj}
\sup_{j} \int_{\R^2} 1_{u_j(\Omega)}(y) S_{u_j}(y) \d y = \sup_j \int_\Omega |g'(\det \D u_j(x))| \lvert \det \D u_j(x)\rvert \d x \leq C.
\end{equation}

\textbf{Step 4.}
To deal with the left-hand side of \eqref{eq:areaform},  we consider the push-forward measure $m_j\equiv (u_j)_\# (F_j\D u_j^T \d x) $, which we can write as $m_j=P_j \alpha_j$ for a finite, positive measure $\alpha_j$ and for an $\alpha_j$-measurable matrix field $P_j\colon \R^2 \to \R^{2\times 2}$ with $|P_j(x)| =1$ for $\alpha_j$-a.e.\ $x \in \Omega$. Then, according to the general change of variables formula \cite[Proposition 2.14]{Maggi2012}, we have
\begin{equation}\label{eq:preciserepre}
 \int_\Omega \langle \D(\eta\circ u_j), F_j\rangle \d x = \int_\Omega \langle \D\eta\circ u_j, F_j\D u_j^T\rangle \d x = \int_{\R^2} \langle \D\eta(y), P_j(y)\rangle \d \alpha_j(y)
\end{equation}
for all $\eta\in C^\infty_c(\R^2,\R^2)$.
We also notice that,  by \eqref{eq:approxEL}, we have
\begin{equation}\label{eq:precisemea}
 \|m_j\|_{\mc M} = \alpha_j(\R^2) \le \|F_j\D u_j^T\|_{1} \le \|F_j\|_1\|\D u_j\|_\infty \to 0.
\end{equation}
Thus, combining \eqref{eq:areaform} and \eqref{eq:preciserepre}, we have shown that,  for $\eta\in C^\infty_c(\R^2)$ such that $\eta|_{L_A(\partial \Omega)}=0$,
\begin{equation}
\label{eq:areaconc}
\int_{\R^2} 1_{u_j(\Omega)} S_{u_j} \ddiv \eta \d y  = \int_{\R^2} \langle \D \eta, P_j\rangle \d \alpha_j.
\end{equation}


\textbf{Step 5.} By taking $f_j\equiv  1_{u_j(\Omega)} S_{u_j}\geq 0$,  note that \eqref{eq:areaconc} shows that
\begin{equation}
\label{eq:divfj}
\D f_j = \ddiv (P_j \alpha_j)\quad \text{in } \mathscr D'(L_A(\Omega)).
\end{equation}
At this point we will employ Allard's strong constancy lemma \cite{Allard1986}, which we state here in a somewhat simpler form:

\begin{lemma}[Strong constancy lemma]\label{lem:allard}
Let $(f_j)\subset L^1(\tilde \Omega)$ be a bounded sequence such that 
$$f_j\geq 0,\qquad \D f_j = \ddiv G_j \text{ for some sequence  }\lvert|G_j \rvert|_{\mc M(\tilde \Omega)}\to 0.$$ Then there is a constant $c\geq 0$ such that $f_j\to c$ in $L^1_\tp{loc}(\tilde \Omega)$.
\end{lemma}

Take arbitrary domains $\Omega_2\Subset \Omega_1\Subset \Omega$. Due to Lemma \ref{lem:topolo}\ref{pro:fi}, for $j$ large enough we have $u_j(\Omega_2)\Subset L_A(\Omega_1)\Subset L_A(\Omega)$.  Thus, by Lemma \ref{lem:allard}, which is applicable by \eqref{eq:defSj},  \eqref{eq:bddSj}, \eqref{eq:precisemea} and \eqref{eq:divfj},  we see that
\begin{equation}
\label{eq:Stoc}
S_{u_j} \to c \text{ in }L^1(L_A(\Omega_1)).
\end{equation}

\textbf{Step 6.} Fix $\e>0$ and consider the set $E_j\equiv \Omega_2\cap \{|g'(\det \D u_j)|\geq c+\e\}$.  Again by Lemma \ref{lem:topolo}\ref{pro:fi} we have $u_j(E_j)\subset L_A(\Omega_1)$,  so using \eqref{eq:Stoc} and  applying the area formula twice, we get
\begin{align*}
c \int_{E_j} \lvert \det \D u_j\rvert \d x \geq c |u_j(E_j)| & = \int_{u_j(E_j)} S_{u_j}(y) \d y + o(1)\\
& \ge \int_{E_j} |g'(\det \D u_j)| \lvert \det \D u_j\rvert \d x +o(1)\\
& \geq (c+\e)\int_{E_j} \lvert \det \D u_j\rvert \d x+o(1).
\end{align*}
This implies that
$$\lim_{j\to \infty} \int_{E_j} \lvert \det \D u_j(x)\rvert \d x=0 .$$
Since $g'(0)=0$ and $g'$ is injective, there is $\tilde \e>0$ such that $\lvert\det \D u_j\rvert\geq \tilde \e$ a.e.\ in $E_j$, and it follows that $\lim_{j\to \infty}|E_j|\to 0$. Since $\Omega_2\Subset \Omega$ is arbitrary, by taking a compact exhaustion of $\Omega$ we deduce that 
\begin{equation}
\label{eq:Ejto0}
\lim_{j\to \infty} |\{|g'(\det \D u_j)|\geq c+\e\}|= 0.
\end{equation}

\textbf{Step 7.}  Recall from Lemma \ref{lem:topolo}\ref{pro:se} that for all $j$ large enough and all $y\in u_j(\Omega_2)\Subset L_A(\Omega_1)$ we have $u_j^{-1}(y)\Subset \Omega_1$. Hence, applying the area formula twice, for such $j$ we have
\begin{align*}
\int_{\Omega_1} |g'(\det \D u_j(x))| |\tp{det}\, \D u_j(x)| \d x &= \int_{u_j(\Omega_1)} \sum_{x \in u_j^{-1}(y)\cap\Omega_1}|g'(\det \D u_j(x))| \d y\\
&\geq  \int_{u_j(\Omega_2)}\sum_{x \in u_j^{-1}(y)\cap\Omega_1}|g'(\det \D u_j(x))| \d y\\
& =  \int_{u_j(\Omega_2)}\sum_{x \in u_j^{-1}(y)}|g'(\det \D u_j(x))| \d y\\
&= \int_{u_j(\Omega_2)} S_{u_j}(y) \d y\geq \int_{\Omega_2} |g'(\det \D u_j(x))| |\tp{det}\, \D u_j(x)| \d x.
\end{align*}
Thus, since $\nu_{12}$ is a homogeneous gradient Young measure,  using  \eqref{eq:g'(t)t} we get
\begin{align*}
|\Omega_1| \int_{\R^{2\times 2}} g'(\det M) \det M \d \nu_{12}(M) \geq \lim_{j\to \infty} \int_{u_j(\Omega_2)} S_{u_j} \d y \geq |\Omega_2| \int_{\R^{2\times 2}} g'(\det M) \det M \d \nu_{12}(M).
\end{align*}
By \eqref{eq:Stoc} and Lemma \ref{lem:topolo}\ref{pro:fi},\ref{pro:thi} we must also have
$$\lim_{j\to \infty} \int_{u_j(\Omega_2)} S_{u_j}(y) \d y = \lim_{j\to \infty} \int_{u_j(\Omega_2)} (S_{u_j}(y) -c )  \d y + c\lim_j|u_j(\Omega_2)| = c \lvert \det A\rvert |\Omega_2|.$$
%
Since $\Omega_2\Subset \Omega_1\Subset \Omega$ are arbitrary domains, combining the last two displays yields
\begin{equation}
\label{eq:c1j}
c \lvert\det A\rvert = \int_{\R^{2\times 2}} g'(\det M) \det M \d \nu_{12}(M)
\end{equation}
If we take determinants in \eqref{eq:Bj} and use this identity,  since $\det A \neq 0$, we also see that
\begin{equation}
\label{eq:c2j}
c^2 \det A	= \det B.
\end{equation}

\textbf{Step 8.}
%
%
Note that \eqref{eq:Ejto0} shows that $\nu_{12}$ is supported in $\{M\in \R^{2\times 2}:|g'(\det M)|\leq c\}$. Furthermore, recall \eqref{eq:g'(t)t}. These facts, combined with \eqref{eq:detB} and \eqref{eq:c1j}, yield
\begin{align*}
\lvert\det B \rvert &= \Big|\int_{\R^{2\times 2}} g'(\det M) [g'(\det M) \det(M)]\d \nu_{12}(M)\Big| \\
& \leq c \int_{\R^{2\times 2}} g'(\det M) \det(M) \d \nu_{12}(M)= c^2 \lvert \det A\rvert.
\end{align*}
By \eqref{eq:c2j} we see that we must have equality in the inequality above. It follows that 
$$g'(\det M)= \sign(\det A) c \qquad \text{ for } \nu_{12}\text{-a.e.\ } M\text{ with }g'(\det M) \det M \neq 0.$$  As in Step 2, $g'(\det M) \det M=0$ if and only if $\det M=0$. Note that $\supp \nu_{12}\not \subseteq\{M:\det M=0\}$, for otherwise by \eqref{eq:minors} we would have $\det A = \det \overline{\nu_{12}} = 0$, in contradiction to our assumption on $A$. Hence $c\neq 0$ and there exists $\tilde c \neq 0$ such that 
\begin{equation}
\label{eq:suppnu}
\supp \nu_{12} \subseteq \{M:\det(M) =0 \}\cup \{M:\det(M) = \tilde c\}.
\end{equation}

\textbf{Step 9.} In this step we prove that in fact $\supp \nu_{12}\subseteq \{\det = \tilde c\}$. First, note that $\supp \nu \subseteq K_g$ by Theorem \ref{thm:props}\ref{it:support},  Lemma \ref{lem:dist} and our hypotheses \eqref{eq:hyp}. Thus,  by \eqref{eq:suppnu}, we have
\begin{equation}
\label{eq:qrmeasure}
\supp \nu_{34} \subseteq \{0\}\cup \{g'(\tilde c) M: M\in \R^{2\times 2}, \det M = \tilde c\}.
\end{equation}
Since $\nu_{34}$ has bounded support, \eqref{eq:qrmeasure} implies that $\nu_{34}$ is a \textit{quasiregular gradient Young measure}, namely it is supported on $\{M: |M|^2 \leq K \det(M)\}$ for some $K$, up to changing orientations so that $\tilde c > 0$.  The 0-1 law for such measures \cite[Theorem 1.3]{Astala2002} asserts that either $\nu_{34}=\delta_0$ or else $\nu_{34}(\{0\})=0$.  The former case cannot happen, as otherwise by Theorem \ref{thm:props}\ref{it:strong} we would get $\D v_j\to 0$ in $L^1$.  In turn, this would imply $B = 0$, which is in contradiction with $\eqref{eq:c2j}$, since $c,\det A \neq 0$. Hence we instead have $\nu_{34}(\{0\})=0$ and so,  by \eqref{eq:qrmeasure}, it must be that $\supp \nu_{34} \subseteq \{g'(\tilde c) M: \det M = \tilde c\}$. This implies that  $\nu_{12}$ is supported in $\{\det = \tilde c\}$, as desired.

\textbf{Step 10.} By the previous step and \eqref{eq:minors},  we have
$$\tilde c=\langle \nu_{12}, \det \rangle = \det \overline \nu_{12} = \det A,$$
hence the claimed support condition on $\nu$ holds.  For the final statements note that the previous step shows that $\delta_{\det A}$ is the Young measure generated by the sequence $(\det \D u_j)$, and so Theorem \ref{thm:props}\ref{it:strong} shows that $\det \D u_j \to \det A$ in $L^1(\Omega)$.  This  strong convergence together with the assumption $g'(\det \D u_j) \D u_j - \D v_j\to 0$ in $L^1$ imply that the three sequences $(g'(\det A) \D u_j)$, $(g'(\det \D u_j) \D u_j)$, $(\D v_j)$ have the same weak limit, hence $g'(\det A) A = B$, i.e.\ $(A,B)\in K_g$.  
\end{proof}

\begin{proof}[Proof of Theorem  \ref{thm:cpt}]
Since $(u_j)\subset W^{1,\infty}$ is bounded and $\Omega$ is also bounded,  it suffices to show that $\det \D u_j\to \det \D u$ in $L^1_\tp{loc}(\Omega)$, as this implies the claimed $L^p$-convergence, which in turn implies that $u$ solves \eqref{eq:EL}.  Thus we can without loss of generality assume that $\Omega$ is a ball.  

If $(u_j)$ satisfies \eqref{eq:approxEL} then, according to Lemma \ref{lem:dist}, there is a bounded sequence $(v_j)\subset W^{1,\infty}(\Omega,\R^2)$ such that $w_j=(u_j,v_j)$ satisfies $\dist(\D w_j,K_g)\to 0$ in $L^1(\Omega)$. We want to show that every subsequence of $(u_j)$ admits a further subsequence whose Jacobians converge strongly and thus, up to passing to non-relabeled subsequences, we can assume that $(\D w_j)$ generates a gradient Young measure $(\nu_x)_{x\in \Omega}$ according to Theorem \ref{thm:props}\ref{it:cpt}.  Let us write $\mu_x\equiv (\pi_{12})_\#\nu_x$. By Theorem \ref{thm:props}\ref{it:support},\ref{it:loc} for a.e.\ $x$ we have $\nu_x\in \mc M^\tp{qc}(K_g)$ and by Theorem \ref{thm:props}\ref{it:bc} and Lemma \ref{lem:dist} we see that,  for a.e.\ $x$,  $\nu_x$ can be generated by a bounded sequence $(\D \tilde w_j)=(\D \tilde u_j, \D \tilde v_j)\subset W^{1,\infty}(\Omega,\R^4)$ satisfying \eqref{eq:hyp}.  Thus, applying Theorem \ref{thm:mainapprox}, we see that there is $c\colon \Omega\to \R$ such that $\mu_x$ is supported in $\{M\in \R^{2\times 2}: \det M = c(x)\}$ for a.e.\ $x\in \Omega$.  In light of \eqref{eq:minors} and the weak continuity of Jacobians $\det \D u_j\wstar \det \D u$ in $L^\infty(\Omega)$, we must have
$$c(x) = \det(\overline{\mu_x}) = \langle \mu_x, \det\rangle = \det \D u(x)\quad \tp{for a.e.\ } x\in \Omega.$$
Hence the Young measure generated by $(\det \D u_j)$ is $(\delta_{\det(\D u(x))})_{x\in \Omega}.$ By Theorem \ref{thm:props}\ref{it:strong}, this implies the desired strong convergence.
\end{proof}

\begin{proof}[Proof of Corollary \ref{cor:qc}]
This follows from \eqref{eq:Kqc}, Theorem \ref{thm:props}\ref{it:support},\ref{it:bc}, Lemma \ref{lem:dist} and also Theorem \ref{thm:mainapprox}.
\end{proof}

	\let\oldthebibliography\thebibliography
	\let\endoldthebibliography\endthebibliography
	\renewenvironment{thebibliography}[1]{
	\begin{oldthebibliography}{#1}
	\setlength{\itemsep}{0.5pt}
	\setlength{\parskip}{0.5pt}
	}
	{
	\end{oldthebibliography}
	}
	
	{\small
	\bibliographystyle{abbrv-andre}
	\bibliography{library}
	}

\end{document}